\documentclass[10pt]{amsart}
\usepackage{latexsym, amssymb, amscd, amsfonts,pstricks,enumitem,amsmath, young}
\usepackage{latexsym, amssymb, amscd, amsfonts,pstricks,enumitem,amsmath}
\usepackage{amsmath,amsthm,amssymb,mathrsfs}
\usepackage{amsmath, amsfonts, amssymb,amsthm}
\usepackage{amstext}
\usepackage{mathrsfs}
\usepackage{tikz-cd}
\usepackage{fancyhdr}
\usepackage{soul}
\usepackage{verbatim}

\numberwithin{equation}{section}

\theoremstyle{plain}
\newtheorem{theorem}{Theorem}[section]

\newtheorem{def-thm}[theorem]{Definition-Theorem}

\newtheorem{definition}[theorem]{Definition}

\theoremstyle{definition}
\newtheorem{remark}[theorem]{Remark}

\newcommand{\sq}[1]{\ifx#1([\else\ifx#1)]%
  \else\message{invalid use of "sq"}\fi\fi}

\DeclareMathOperator{\mult}{mult}

\DeclareMathOperator{\Supp}{Supp}

\DeclareMathSymbol{\idot}{\mathbin}{operators}{`\.}
\allowdisplaybreaks
\makeatother

\begin{document}
 
\title[Families of Unit Equations and Exponential Diophantine Problems]{Families of Unit Equations and Exponential Diophantine Problems via Integral Points}
 
\author{Julie Tzu-Yueh Wang}
\address{Institute of Mathematics, Academia Sinica \newline
\indent No.\ 1, Sec.\ 4, Roosevelt Road\newline
\indent Taipei 10617, Taiwan}
\email{tywang@as.edu.tw}

\author{Zheng Xiao}
\address{Department of Mathematics, University of Colorado Boulder
  \newline
\indent  2300 Colorado Avenue \newline
\indent  Boulder, CO 80309 USA} 
\email{zheng.xiao@colorado.edu}

\thanks{2020\ {\it Mathematics Subject Classification}: Primary 32H30; Secondary 32Q45 and 30D35}
\thanks{The  first-named author was supported in part by Taiwan's NSTC grant  113-2115-M-001-011-MY3.}

\begin{abstract} 
This paper investigates the distribution of integral points on projective varieties via two distinct methods: the Ru-Vojta theorem and our higher-dimensional generalization of the Huang-Levin-Xiao inequalities. These approaches operate under distinct geometric conditions, specifically the transverse and proper intersections of boundary divisors.  Applying this framework, we prove  degeneracy results for the solution sets of two classes of one-parameter families of unit equations, differentiated by the degrees of their polynomial coefficients.
Finally, we extend previous greatest common divisor (GCD) estimates to derive new results for specific exponential Diophantine equations and   the distribution of digits in  $q$-adic representations.
 \end{abstract}

\maketitle
 \baselineskip=16truept

 \section{Introduction and Main Theorems}\label{sec:intro}
 \subsection{Introduction}
 In a recent breakthrough, Ru and Vojta established a significant generalization of Schmidt's Subspace Theorem.  
 \begin{theorem}[{ \cite[General Theorem (Arithmetic Part)]{RV20}}]\label{Ru-Vojta} Let $k$ be a number field and $M_k$ be the set of places on $k$. Let $S\subset M_k$ be a finite subset containing the archimedean places.  Let $X$ be a projective variety defined over $k$.
 Let $D_1,\hdots,D_q$  be effective Cartier divisors intersecting properly on $X$.
 Let  $\mathcal L$ be a big line sheaf on $X$.  Then for any $\epsilon>0$, there exists a   proper Zariski-closed subset 
 $Z\subset X$, independent of $k$ and $S$,  such that
\begin{align}
 \sum_{i=1}^q \beta_{\mathcal L, D_i} m_{D_i,S}(x)\le (1+\epsilon) h_{\mathcal L}(x)
\end{align}
holds for  all but finitely many $x$ in  $X(k)\setminus Z$.
\end{theorem}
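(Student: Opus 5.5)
This is the Ru--Vojta General Theorem, which the paper cites rather than proves. The plan below follows the filtration method of Corvaja--Zannier, Levin and Autissier, as refined by Ru--Vojta, and reduces the statement to Schmidt's Subspace Theorem in Evertse--Ferretti's form.

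Recall the invariant
\[
\beta_{\mathcal L,D}=\liminf_{N\to\infty}\frac{\sum_{m\ge1} h^0(X,\mathcal L^N(-mD))}{N\,h^0(X,\mathcal L^N)}.
\]
Fix $\epsilon>0$ and choose $N$ large enough that, for every $i$,
\[
\sum_{m\ge1}h^0(\mathcal L^N(-mD_i))\ \ge\ (\beta_{\mathcal L,D_i}-\epsilon')\,N\,h^0(\mathcal L^N).
\]
Set $V=H^0(X,\mathcal L^N)$. Near any point of $X$, at most $\dim X$ of the $D_i$ pass through it. So, using a finite cover by Weil-function domains, for each $x$ and each $v\in S$ we may restrict to an index set $I$ with $\bigcap_{i\in I}D_i\neq\emptyset$. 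For $i\notin I$, the term $\lambda_{D_i,v}(x)$ is $O(1)$.

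Fix such an $I$. Because the $D_i$ meet properly, the local equations of $\{D_i\}_{i\in I}$ form a regular sequence. This is the key input from properness: it lets us build a filtration of $V$ indexed by $\mathbf a\in\mathbb N^{I}$, namely
\[
\mathcal F(\mathbf a)=H^0\bigl(\mathcal L^N(-\textstyle\sum a_iD_i)\bigr).
\]
Following Autissier's lemma, choose a basis $\{s_j\}$ of $V$ adapted simultaneously to all the filtrations $\mathcal F_{\mathbf t}(x)=\sum_{\mathbf t\cdot\mathbf a\ge x}\mathcal F(\mathbf a)$, for weights $\mathbf t$ in a finite simplex grid. Then
\[
\sum_j \operatorname{ord}_{D_i}(s_j)\ \ge\ \sum_{m\ge1}\dim \mathcal F_{e_i}(m).
\]
Regularity, via a combinatorial convexity argument (the Ru--Vojta ``joint filtration'' lemma), gives a single basis $B_{I,\mathbf t}$ with
\[
\sum_{s\in B}\lambda_{s,v}(x)\ \ge\ \min_{\mathbf t}\sum_{i\in I}\Bigl(\sum_{m\ge1}h^0(\mathcal L^N(-mD_i))\Bigr)\lambda_{D_i,v}(x)-O(1).
\]
Here one uses that $\lambda_{D_i,v}$ is attained by the worst-case weight $\mathbf t$ for the given point.

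Next, embed $X$ via $V$ (or via a generically finite rational map if $\mathcal L$ is only big; write $\mathcal L^N=A\otimes E$ with $A$ ample and $E$ effective, and absorb $E$ into $Z$). Apply Schmidt's Subspace Theorem to the finitely many linear forms given by all bases $B_{I,\mathbf t}$. Outside a proper Zariski-closed set $Z$, this gives
\[
\sum_{v\in S}\max_{B}\sum_{s\in B}\lambda_{s,v}(x)\ \le\ (\dim V+\epsilon'')\,h_{\mathcal L^N}(x)+O(1).
\]
Combining this with the previous inequality and dividing by $N\dim V$ yields
\[
\sum_i(\beta_{\mathcal L,D_i}-\epsilon')\,m_{D_i,S}(x)\le(1+\epsilon'')\,h_{\mathcal L}(x)+O(1).
\]
Readjusting $\epsilon$ and absorbing the $O(1)$ term, which needs only $h_{\mathcal L}\to\infty$ off $Z$ by Northcott for big $\mathcal L$, gives the claim with finitely many exceptions. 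The exceptional set $Z$ depends only on the linear forms, hence only on $X$, the $D_i$ and $\epsilon$, not on $k$ or $S$. This uses Vojta's refinement that the Subspace Theorem exceptional set can be taken independent of $k$ and $S$.

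The main obstacle is the middle step: producing, from proper intersection alone (not normal crossings), a single basis whose local Weil functions dominate $\sum_i(\sum_m h^0(\mathcal L^N(-mD_i)))\lambda_{D_i,v}$. This requires the regular-sequence property to control the graded pieces of the multi-filtration, together with the convexity/min–max argument over $\mathbf t$. I would follow Ru--Vojta's Proposition on filtrations, proved via Corvaja--Zannier's lemma, and check that the error from discretizing $\mathbf t$ is $O(\epsilon)$.
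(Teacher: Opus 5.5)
The paper gives no proof of this statement; it is quoted from \cite{RV20}. Your outline has the architecture of the Ru--Vojta argument: Autissier-type filtrations of $H^0(\mathcal L^N)$, finitely many bases, Schmidt's theorem with a maximum over bases, and Kodaira's lemma for big $\mathcal L$. The outer steps are fine, but the middle step does not give what you claim.

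\textbf{First problem: no simultaneously adapted basis.} A basis adapted simultaneously to the filtrations $m\mapsto H^0(\mathcal L^N(-mD_i))$, $i\in I$, exists when $|I|\le 2$, since two flags always share an adapted basis. It does not exist in general when $|I|\ge 3$. Such a basis would force the subspaces $V_{\mathbf b}=H^0(\mathcal L^N(-\sum_i b_iD_i))$ to generate a distributive lattice, for instance $(V_{e_1}+V_{e_2})\cap V_{e_3}=V_{e_1+e_3}+V_{e_2+e_3}$. The regular-sequence property gives the corresponding identity for local ideals. For global sections, however, it requires vanishing of $H^1$'s of bundles $\mathcal L^N(-\sum_i b_iD_i)$ with $|\mathbf b|$ of order $N$, which you do not have. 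If such a basis existed, each $s_j$ would lie in $V_{\mu(s_j)}$ with $\mu_i(s_j)=\operatorname{ord}_{D_i}(s_j)$, and your weighted inequality would follow at once, with no weights $\mathbf t$ needed.

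What is actually available is one basis $B_{I,\mathbf t}$ per grid point, adapted to $\mathcal F_{\mathbf t}$ alone. For a given $(x,v)$ one uses the grid point nearest to the normalized vector $(\lambda_{D_i,v}(x))_{i\in I}$. This is the best weight for that point, and it is selected by the $\max_B$ in Schmidt's theorem; it is not a $\min_{\mathbf t}$ or a worst case.

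\textbf{Second problem: Autissier's lemma only gives the minimum.} For such a basis, Autissier's lemma \cite{Aut11} bounds $F(\mathbf t)=\frac{1}{h^0(\mathcal L^N)}\int_0^\infty\dim\mathcal F_{\mathbf t}(x)\,dx$ from below only by the \emph{minimum} of the vertex values $F(e_i)=\frac{1}{h^0(\mathcal L^N)}\sum_{m\ge1}h^0(\mathcal L^N(-mD_i))$. It does not give the average $\sum_i t_iF(e_i)$ that your inequality needs. Hence, with $\delta$ coming from the discretization of $\mathbf t$,
\[
\sum_{s\in B_{I,\mathbf t}}\lambda_{s,v}(x)\ \ge\ (1-\delta)\Bigl(\min_{i\in I}\sum_{m\ge1}h^0(\mathcal L^N(-mD_i))\Bigr)\sum_{i\in I}\lambda_{D_i,v}(x)-O(1).
\]
So your argument proves only $\min_i\beta_{\mathcal L,D_i}\sum_i m_{D_i,S}(x)\le(1+\epsilon)h_{\mathcal L}(x)$.

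The missing step is a rescaling. Two facts make it work: $\beta_{\mathcal L,bD}=\beta_{\mathcal L,D}/b$ for integers $b\ge1$, and $b_1D_1,\ldots,b_qD_q$ still intersect properly because powers of a regular sequence form a regular sequence. Discard the $D_i$ with $\beta_{\mathcal L,D_i}=0$. Take $c>0$ small and $b_i=\lfloor\beta_{\mathcal L,D_i}/c\rfloor$, so that $c\le\beta_{\mathcal L,b_iD_i}\le(1+\epsilon)c$. Apply the minimum version to $b_1D_1,\ldots,b_qD_q$, and use $\beta_{\mathcal L,D_i}\lambda_{D_i,v}\le(1+\epsilon)c\,\lambda_{b_iD_i,v}+O(1)$.

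With these two corrections your outline is a faithful sketch of the proof in \cite{RV20}.
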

In this statement, for any closed subscheme \(Y\subset X\), the constant $\beta_{\mathcal{L}, Y}$ is defined by  
\begin{equation*}
  \beta(\mathscr L, Y)
    = \liminf_{N\to\infty}
      \frac {\sum_{m\ge 1}h^0(\mathscr L^N\otimes {\mathscr I}_{Y}^{m})} {Nh^0(\mathscr L^N)},
\end{equation*}
where ${\mathscr I}_{Y}$ is the ideal sheaf defining $Y$. Additionally, $m_{D_i,S}(x)$ denotes  the proximity function and $h_{\mathcal L} (x)$ is the height function associated with $\mathcal L$, as defined in Section \ref{Preliminary1}.  Note that  for a big divisor $D$ on $X$, we will set $\mathcal L=\mathcal O_X(D)$ and adopt the simplified notation $h_{D}:=h_{\mathcal O_X(D)}$ and  $\beta(D, Y):= \beta(\mathcal O_X(D), Y)$.

The foundational techniques developed to prove Theorem~\ref{Ru-Vojta}, alongside its far-reaching applications, have become central to Diophantine approximation and geometry. Its relationship with the earlier work of McKinnon and Roth \cite{MR2015} was clarified by Ru and Wang \cite{RuWang2017}. The theorem was subsequently extended to the setting of subvarieties by Ru and Wang \cite{RuWang2022}, and independently by Vojta \cite{Voj2023}. Taking a different direction, Heier and Levin \cite{HL2021} obtained a related inequality in which the $\beta$-constants are replaced by Seshadri constants. As this circle of ideas continues to develop, it is expected to lead to a rich range of further applications.
More recently, Huang, Levin, and Xiao \cite{HLX}  proved the following refinement.
\begin{theorem}[{\cite[Theorem 1.4]{HLX}}]\label{chainHLX}
   Let $X$ be a projective surface defined over a number field $k$.  Let $S$ be a finite set of places of $k$.  For each $v\in S$, let $D_v\supset Y_v$ be a regular sequence of nonempty closed subschemes of $X$. Let $A$ be a big divisor on $X$, and let $\epsilon >0$.   
 Then there exists a proper  Zariski-closed subset $Z\subset X$ such that  for all  $P\in X(k)\setminus Z$, we have
  \begin{align}\label{mainineq}
\sum_{v\in S} \Bigg(
  \beta(A, D_{v})\lambda_{D_{v},v}(P)  
 + \Big( \beta(A, Y_v ) -   \beta(A, D_{v}) \Big) \lambda_{Y_v,v }(P)
\Bigg)  
 <(1+\varepsilon) h_{A}(P). 
\end{align}
\end{theorem}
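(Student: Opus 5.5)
The plan is to adapt the filtration argument behind Theorem~\ref{Ru-Vojta}, as in Ru--Vojta and Autissier, and to reduce everything to Schmidt's Subspace Theorem applied to a well-chosen basis of $H^0(X,NA)$ at each $v\in S$. Fix $N$ large and write $V_N=H^0(X,\mathcal O_X(NA))$. For each $v\in S$ we use the chain $D_v\supset Y_v$ to define a two-step filtration of $V_N$. Since $\mathscr I_{D_v}\subset \mathscr I_{Y_v}$, we can index it by pairs $(a,b)$ and set
\[
\mathcal F_v(a,b)=H^0\bigl(X,\mathcal O(NA)\otimes \mathscr I_{D_v}^{a}\mathscr I_{Y_v}^{b}\bigr).
\]
The regular sequence hypothesis says that, locally, $D_v=(f)$ with $f$ a nonzerodivisor and $Y_v=(f,g)$ with $g$ regular modulo $f$. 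It is this that lets us control the graded pieces $\mathscr I_{D_v}^a\mathscr I_{Y_v}^b/(\text{next})$ and compute their sizes. The key point is that a function in $\mathscr I_{D_v}^a\mathscr I_{Y_v}^b$ has $\lambda$-value at least $a\lambda_{D_v,v}+b\lambda_{Y_v,v}$ up to $O(1)$. The regular sequence property also gives the reverse comparison on graded pieces, in the sense that $\lambda_{Y_v,v}\le\lambda_{D_v,v}$ up to $O(1)$ and the local weights add.

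Next, for each $v$ we choose a basis $\mathcal B_v$ of $V_N$ adapted to the filtration $\{\mathcal F_v(a,b)\}$, as in Corvaja--Zannier and Levin. Rather than a single total order, we take a basis adapted to a family of filtrations indexed by a parameter $t\in[0,1]$ that interpolates between the $D_v$-order and the $Y_v$-order, so that the basis is adapted to the filtration by the weight $a+tb$ for every $t$ in a finite set. Using the Subspace Theorem on $\mathbb P(V_N^*)$, with linear forms $\mathcal B_v$ at $v$, we get
\[
\sum_{v\in S}\sum_{s\in\mathcal B_v}\lambda_{s,v}(P)\le (\dim V_N+\epsilon)\,N h_A(P)+O(1)
\]
outside a proper Zariski-closed set. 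For a point $P$, let $t_v$ be determined by the ratio $\lambda_{Y_v,v}(P)/\lambda_{D_v,v}(P)$. Then the left side is at least
\[
\sum_v\Bigl(\sum_{m\ge1}\dim\mathcal F_v(m\text{ in }D)\Bigr)\lambda_{D_v,v}+\Bigl(\sum_{m\ge1}\bigl(\dim\mathcal F_v(m\text{ in }Y)-\dim\mathcal F_v(m\text{ in }D)\bigr)\Bigr)\lambda_{Y_v,v}.
\]
Here we use $\lambda_{Y_v}\le\lambda_{D_v}$ and split $\lambda_{D_v}=\lambda_{Y_v}+(\lambda_{D_v}-\lambda_{Y_v})$ to read off the coefficients. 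Dividing by $N\dim V_N$ and letting $N\to\infty$ turns these sums into $\beta(A,D_v)$ and $\beta(A,Y_v)-\beta(A,D_v)$. Only finitely many candidate $t$ values, and so finitely many bases, are needed, so the exceptional sets are a finite union and remain a proper closed $Z$.

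The main obstacle is the lower bound on $\sum_{s\in\mathcal B_v}\lambda_{s,v}$ by the combined expression. We need a single basis that is simultaneously good for the $D_v$-filtration and the $Y_v$-filtration, and we need the dimension count of the bigraded pieces. On a surface this is where the regular sequence hypothesis enters, via the local structure $(f)\subset(f,g)$. There $\mathscr I_D^a\mathscr I_Y^b=f^a(f,g)^b$, and a basis adapted to $\mathscr I_Y^m$ together with the multiplication-by-$f$ structure yields the needed joint adaptation (a two-filtration version of Levin's lemma on bases adapted to two filtrations). I would first try to prove a general linear algebra lemma: for two filtrations of a vector space there is a common adapted basis, which is standard. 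After that, the remaining work is a careful local estimate $\lambda_{s,v}\ge a\lambda_{D_v}+b\lambda_{Y_v}-O(1)$ for $s\in\mathcal F_v(a,b)$, using local generators.
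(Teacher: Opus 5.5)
The paper does not reprove this statement; it is quoted from \cite{HLX}. I am therefore judging your proposal on its own terms. Your framework is right: for each $v\in S$, take a basis $\mathcal B_v$ of $V_N=H^0(X,\mathcal O_X(NA))$ adapted to both the $D_v$-filtration $\{H^0(X,\mathcal O_X(NA)\otimes\mathscr I_{D_v}^m)\}_m$ and the $Y_v$-filtration $\{H^0(X,\mathcal O_X(NA)\otimes\mathscr I_{Y_v}^m)\}_m$, feed it into the Subspace Theorem, and aim for the lower bound you display. But the step you call the main obstacle is not carried out, and your two ingredients do not connect as stated. Each $s\in\mathcal B_v$ gets integers $a(s)\le b(s)$, maximal with $s$ in the respective pieces. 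Then $\sum_s a(s)=\sum_{m\ge1}h^0(X,\mathcal O_X(NA)\otimes\mathscr I_{D_v}^m)$, and similarly $\sum_s b(s)$ with $Y_v$. To reach your display you need
\[
\lambda_{s,v}\ge a(s)\lambda_{D_v,v}+\bigl(b(s)-a(s)\bigr)\lambda_{Y_v,v}-O(1).
\]
Your local estimate applies to sections of the \emph{product} $\mathscr I_{D_v}^a\mathscr I_{Y_v}^b$. The adapted basis, however, only places $s$ in the \emph{intersection} $\mathscr I_{D_v}^{a}\cap\mathscr I_{Y_v}^{b}$, which by itself gives only $\lambda_{s,v}\ge\max\{a\lambda_{D_v,v},\,b\lambda_{Y_v,v}\}-O(1)$. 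Since $a\lambda_{D_v,v}+(b-a)\lambda_{Y_v,v}$ dominates both terms of this maximum, that bound is too weak and cannot produce the coefficient $\beta(A,Y_v)-\beta(A,D_v)$. Also, the inequality $\lambda_{Y_v,v}\le\lambda_{D_v,v}+O(1)$, to which you attribute the role of regularity, holds for any closed subscheme $Y_v\subset D_v$.

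The missing idea is the identity
\[
\mathscr I_{D_v}^{a}\cap\mathscr I_{Y_v}^{b}=\mathscr I_{D_v}^{a}\,\mathscr I_{Y_v}^{b-a}\qquad (b\ge a),
\]
and this is exactly where the regular-sequence hypothesis is used. Your remark about the multiplication-by-$f$ structure points toward it, but it must be stated and proved. Away from $\Supp Y_v$ the identity is trivial. At $P\in\Supp Y_v$, write $\mathscr I_{D_v}=(f)$ and $\mathscr I_{Y_v}=(f,g)$ with $f,g$ regular in $\mathcal O_{X,P}$. Then $\mathrm{gr}_{(f,g)}\mathcal O_{X,P}$ is a polynomial ring over $\mathcal O_{X,P}/(f,g)$ in the initial forms of $f$ and $g$. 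So the initial form of $f$ is a nonzerodivisor, and $\operatorname{ord}_{(f,g)}(f^ah)=a+\operatorname{ord}_{(f,g)}(h)$ for $h\neq0$; the order is finite by Krull's intersection theorem. Hence $f^ah\in(f,g)^b$ forces $h\in(f,g)^{b-a}$. With this identity, $s\in H^0(X,\mathcal O_X(NA)\otimes\mathscr I_{D_v}^{a}\mathscr I_{Y_v}^{b-a})$, your local estimate gives the inequality above, and summing over $\mathcal B_v$ and $v$ finishes the proof. This uses one basis per $v$, independent of $P$, so the $t$-interpolation and the bigraded dimension counts are unnecessary. The same identity, in the form $\mathcal F_v(a,b)=\mathcal F_v(a,0)\cap\mathcal F_v(0,a+b)$, is also what would make one basis adapted to all your weight filtrations $a+tb$ at once: every $\mathcal F_v(a,b)$, and hence every sum of them, is then spanned by elements of $\mathcal B_v$. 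Without the identity this is not automatic, since three or more filtrations need not admit a common adapted basis.
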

Here, $\lambda_{D_{v},v} $ and  $\lambda_{Y_v,v }$ are the local  Weil functions for $D_{v}$ and $Y_v$ respectively, with respect to $v$.  The definition of a regular sequence of nonempty closed subschemes of $X$ is given in Definition \ref{regular chain}.

 This result leads to several significant applications in \cite{HLX}, notably to the GCD problem, the study of integral points on certain affine surfaces, and the analysis of families of unit equations.

Theorem~\ref{chainHLX} was subsequently generalized by the authors in \cite{WX2025} to arbitrary projective varieties, although the subvarieties $Y_v$ are still assumed to be points. In the complex setting, this generalization enables the study of hyperbolicity for quasi-projective varieties whose boundary divisor consists of $n+1$ numerically parallel effective divisors on a complex projective variety of dimension $n$, assuming a non-empty intersection.   This result complements that of Ru and Wang in \cite{RW24}, where the divisors are required to have normal crossings. Furthermore, \cite{RW24} extends specifically the case established by Noguchi, Winkelmann, and Yamanoi \cite{NWY07} concerning the Green-Griffiths-Lang conjecture for $X=\mathbb P^n$ with $D$ consisting of $n+1$ normal crossings hypersurfaces. 

In this paper, we study the arithmetic applications of the number field counterpart of \cite{WX2025}. Notably, several of the applications derived in \cite{HLX} were previously investigated by Corvaja and Zannier \cite{CZ00, CZ2004, CZ2005, CZ2006, CZ2010} using different methods. It is worth emphasizing that the techniques developed in many of these foundational works were crucial, laying out groundwork for the eventual establishment of Theorem~\ref{Ru-Vojta}. With Theorem~\ref{Ru-Vojta} now firmly established, it is natural to revisit these earlier results through its lens. By adopting this theoretical approach, we not only clarify the ingenious methods originally used by Corvaja and Zannier, but also successfully generalize several of their results to higher dimensions. Finally, we compare these methodologies, highlighting the subtleties between the approach utilizing Theorem~\ref{Ru-Vojta} and the higher-dimensional version of Theorem~\ref{chainHLX} developed in \cite{WX2025}.
 
  \subsection{Main Theorems}\label{mainThm}
In this subsection, we first state the arithmetic counterpart to \cite[Theorem 1.9]{WX2025}, thereby generalizing Theorem~\ref{chainHLX}. We then explore an application concerning integral points, extending the surface case discussed in \cite[Theorem 1.5]{HLX}. Finally, we apply Theorem~\ref{Ru-Vojta} to obtain  a higher-dimensional extension of the result established by Corvaja and Zannier in \cite[Corollary 1.2]{CZ2006}, which, under different assumptions, serves to complement our aforementioned extension of \cite[Theorem 1.5]{HLX}.

To state our theorems, we first recall some definitions and conventions from \cite{HLX}. 


  \begin{definition}
    Let $D_1,\ldots, D_r$ be effective Cartier divisors on a projective variety $X$. We say that $D_1,\ldots, D_r$ intersect properly if for every nonempty subset $I\subset \{1,\ldots, r\}$ and every point $ P \in \Supp \cap_{i\in I}D_i$, the local defining equations $(f_i)_{i\in I}$ form a regular sequence in the local ring $\mathcal{O}_{X,P}$, where each $f_i$ is a local equation of $D_i$ at $P$.
\end{definition} 
 
 \begin{definition}\label{regular chain}
Let $Y_1\supset Y_2\supset \cdots\supset Y_m$ be closed subschemes of a projective variety $X$. We say that this chain forms  a {\it regular sequence of closed subschemes of $X$} if, for each $i\in \{1,\ldots, m\}$ and every point $P\in \Supp Y_i$, there exists a regular sequence $f_1,\ldots, f_i\in \mathcal{O}_{X,P}$ such that for $1\leq j\leq i$, the ideal sheaf of $Y_j$ is locally given by the ideal $(f_1,\ldots, f_j)$ in $\mathcal{O}_{X,P}$.
\end{definition}
\begin{remark}
    If divisors $D_1,\ldots, D_m$ intersect properly on $X$, then the chain of nested intersections $D_1\supset D_1\cap D_2\supset\cdots\supset \bigcap_{i=1}^m D_i$ forms a regular sequence of closed subschemes of $X$. Moreover, if $X$   is smooth and  $D_1,\ldots,D_m$ are in general position, then this chain is always a regular sequence.
    \end{remark}

The complex counterparts of the following two arithmetic theorems were established in \cite{WX2025}. Accordingly, we state the arithmetic results here without proof. These theorems extend Theorem~\ref{chainHLX} and \cite[Theorem 5.9]{HLX}, respectively.

\begin{theorem}[cf. {\cite[Theorems 1.8]{WX2025}}]\label{trungeneral}
   Let $X$ be a projective variety of dimension $n$ defined over a number field $k$.  Let $\mathcal M$ be a finite collection of  pairs $ (I,Y) $, where $I\subset\{1,\hdots,q\}$ has cardinality $n-1$, the divisors $D_i$ for $i \in I$ intersect properly, and the chain $\bigcap_{i \in I} D_i \supset Y$ forms a regular sequence.
Let $A$ be a big divisor on $X$.  
 Then, for every $\varepsilon>0$,  there exists a proper  Zariski-closed subset $Z\subset X$ such that 
  \begin{align}\label{mainineq}
\sum_{v\in S} \max_{(I,Y) \in \mathcal{M}}\Bigg(
&\sum_{i \in I}  \beta(A, D_{i})\lambda_{D_{i},v}(P)  
 + \Big( \beta(A, Y ) - \sum_{i \in I} \beta(A, D_i) \Big) \lambda_{Y,v }(P)
\Bigg)  \notag\\
&<(1+\varepsilon) h_{A}(P) 
\end{align}
 for all but finitely many $P\in X(k)\setminus Z$.
\end{theorem}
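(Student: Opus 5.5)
The plan is to follow the filtration method behind Theorem~\ref{Ru-Vojta} and Theorem~\ref{chainHLX}, feeding the resulting bases into Schmidt's Subspace Theorem in the form of Evertse--Ferretti / Ru--Vojta. First, since $\mathcal M$ is finite, I partition the points $P$ according to which pair $(I_v,Y_v)$ attains the maximum at each $v\in S$. This gives finitely many subsets, and it suffices to prove the inequality separately on each one with $(I_v,Y_v)$ fixed for each $v$. Fix $N$ large, set $V_N=H^0(X,NA)$, and fix $v$ with its pair $(I,Y)$. On $V_N$ I consider the multi-filtration
\[
\mathcal F(\mathbf a,b)=H^0\Big(X,\ NA\otimes \prod_{i\in I}\mathcal I_{D_i}^{a_i}\cdot \mathcal I_Y^{b}\Big),
\]
indexed by $\mathbf a\in\mathbb Z_{\ge0}^{n-1}$ and $b\ge0$.

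The key local input is that near a point $P\in\Supp Y$, after shrinking to the local ring, I can choose a regular sequence $f_1,\dots,f_{n-1},g$ with $D_i=(f_i)$ and $Y=(f_1,\dots,f_{n-1},g)$. This uses the proper intersection hypothesis and the regular-sequence hypothesis on $\bigcap_{i\in I}D_i\supset Y$. For such a regular sequence, the associated graded ring is a polynomial ring, so the filtrations induced by the $n$ functions admit a common adapted basis. I then pick a basis $s_1,\dots,s_M$ of $V_N$ adapted simultaneously to all $\mathcal F(\mathbf a,b)$, in the style of Corvaja--Zannier and Autissier's lemma on bases adapted to two filtrations. Each $s_j$ then satisfies
\[
\lambda_{\mathrm{div}(s_j),v}(P)\ \ge\ \sum_{i\in I}a_i(s_j)\,\lambda_{D_i,v}(P)+b(s_j)\big(\lambda_{Y,v}(P)-\textstyle\sum_{i\in I}\lambda_{D_i,v}(P)\big)^{+}+O(1),
\]
using the local identity $\lambda_{Y,v}=\min_i\{\lambda_{f_i},\lambda_g\}$. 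Summing over $j$ and passing to $N\to\infty$ bounds the total contribution from below by $\sum_i\beta(A,D_i)\lambda_{D_i,v}+(\beta(A,Y)-\sum_i\beta(A,D_i))\lambda_{Y,v}$, times $Nh^0(NA)$, up to $\epsilon$. Here the coefficient counts are obtained through the $\liminf$ definition of $\beta$, and the step $\beta(A,Y)-\sum\beta(A,D_i)$ comes from a convexity/averaging argument over the simplex of weights, as in \cite{HLX}.

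Finally, I apply Schmidt's Subspace Theorem to the map given by $V_N$, with the chosen bases $\{s_j^{(v)}\}$ at each $v\in S$. This gives $\sum_v\sum_j\lambda_{s^{(v)}_j,v}(P)\le (M+\epsilon)h_{NA}(P)$ outside a proper Zariski-closed set. Here bigness of $A$ is used to handle the non-embedding case via Kodaira's lemma, in the same way as in \cite{RV20}. Dividing by $NM$ then yields the stated inequality.

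I expect the main obstacle to be the construction of the adapted basis together with the lower bound on the coefficient sums. The multi-index filtration is not totally ordered, so a single basis cannot be adapted to every $\mathcal F(\mathbf a,b)$. My first attempt would be to reduce to finitely many linear orderings determined by the relative sizes of $\lambda_{D_i,v}(P)$ and $\lambda_{Y,v}(P)$. I would choose a basis adapted to the corresponding chain for each ordering, and then use the regular-sequence hypothesis to compute graded dimensions as in \cite{WX2025}. This should show that the resulting weights sum to at least the $\beta$-combination above.
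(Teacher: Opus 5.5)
The paper states this result without proof, deferring to the complex version in \cite{WX2025}, so I am assessing your argument on its own terms. It has a genuine gap exactly where you suspect. The Corvaja--Zannier/Autissier lemma gives a basis adapted to \emph{two} filtrations. That suffices in the surface case $|I|=1$ of Theorem~\ref{chainHLX}. For $n\ge 3$, however, you have $n$ filtrations: $\{H^0(NA-aD_i)\}_a$ for each $i\in I$, and $\{H^0(NA\otimes\mathcal I_Y^b)\}_b$. These admit no common adapted basis in general; already three flags $0\subset L_j\subset k^2$ with distinct lines $L_j$ have none. That $\operatorname{gr}_J\mathcal O_{X,Q}$ is a polynomial ring is a local fact and does not produce such a global basis. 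Your fallback (one basis per ordering, adapted to an unspecified chain) does not repair this. For any single chain you still need a lower bound for $\sum_j\mu(s_j)$ in terms of the $\beta$'s, and the proposal only asserts one.

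Your displayed local estimate also loses the $Y$-term. Since $\mathcal I_{D_i}\subset\mathcal I_Y$, we have $\lambda_{Y,v}\le\lambda_{D_i,v}+O(1)$, so $(\lambda_{Y,v}-\sum_{i\in I}\lambda_{D_i,v})^+=O(1)$. The correct input uses the regular sequence $f_1,\dots,f_{n-1},g$ with $J=(f_1,\dots,f_{n-1},g)=\mathcal I_{Y,Q}$: one has $(\mathbf f^{\mathbf a})\cap J^b=\mathbf f^{\mathbf a}J^{\max(b-|\mathbf a|,0)}$. Hence $\lambda_{s,v}\ge\sum_{i}a_i(\lambda_{D_i,v}-\lambda_{Y,v})+b\,\lambda_{Y,v}-O(1)$. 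This identity, not a convexity argument, is where the coefficient $\beta(A,Y)-\sum_i\beta(A,D_i)$ comes from. Because all coefficients in this form are nonnegative, lower bounds from the $\liminf$ definition of $\beta$ suffice.

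The missing idea is to collapse the $n-1$ divisor filtrations into one, leaving only two filtrations.
\begin{itemize}
\item \emph{Weights.} Choose integers $p_i$ (the Ru--Vojta rescaling) so that the numbers $\beta(A,D_i)/p_i$ are nearly equal. For each $v$, set $w=(p_i(\lambda_{D_i,v}(P)-\lambda_{Y,v}(P)))_{i\in I}$ and $\kappa=\sum_i w_i$. Let $\mathbf t$ run over a finite net in the simplex approximating $w/\kappa$.
\item \emph{Filtrations.} Use the single filtration $\mathcal F_{\mathbf t}(x)=\sum_{\mathbf a\cdot\mathbf t\ge x}H^0(X,NA-\sum_{i\in I}a_ip_iD_i)$ together with $\mathcal G(b)=H^0(X,NA\otimes\mathcal I_Y^b)$. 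Two filtrations do have a common adapted basis.
\item \emph{Local bound.} By regularity of $f_1,\dots,f_{n-1},g$, the ideal $\big(\sum_{\mathbf a\cdot\mathbf t\ge x}\prod_i f_i^{p_ia_i}\big)\cap J^b$ is generated by monomials in $f_1,\dots,f_{n-1},g$. The computation above then gives $\lambda_{s,v}\ge\kappa\,\mu_{\mathbf t}(s)+b(s)\lambda_{Y,v}$, up to errors that vanish as the net is refined.
\item \emph{The $Y$-part.} Exactly, $\sum_s b(s)=\sum_{m\ge1}h^0(NA\otimes\mathcal I_Y^m)$.
\item \emph{The divisor part.} Autissier's lemma, valid since the $D_i$ ($i\in I$) intersect properly, gives $\int_0^\infty\dim\mathcal F_{\mathbf t}(x)\,dx\ge\min_{i\in I}\sum_{m\ge1}h^0(NA-mp_iD_i)$. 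Hence $\kappa\sum_s\mu_{\mathbf t}(s)\gtrsim Nh^0(NA)\sum_{i\in I}\beta(A,D_i)(\lambda_{D_i,v}-\lambda_{Y,v})$.
\end{itemize}
Adding the last two bounds gives $Nh^0(NA)$ times the bracket in the theorem. Your partition of points by the maximizing pair in $\mathcal M$ and your Subspace Theorem step then conclude as you describe.
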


As an immediate consequence of Theorem \ref{trungeneral}, we arrive at Theorem \ref{mainTheorem2}, which addresses the distribution of $S$-integral points under specific numerical and intersection-theoretic hypotheses. 

\begin{theorem}[cf. {\cite[Theorems 1.4]{WX2025}}]\label{mainTheorem2}
Let $X$ be a  projective variety  of dimension $n$ over a number field $k$. 
Let $D_1, \ldots,D_{n+1}$ be effective Cartier  divisors on $X$ such that  there exist positive integers $a_1,\ldots, a_{n+1}$ such that $a_1D_1, \ldots, a_{n+1}D_{n+1}$ are all numerically equivalent to an ample divisor $D$.  
Assume that   any $n$-tuple of divisors among  $D_1, \ldots,D_{n+1}$ intersect properly, and that
\begin{align*}
\bigcap_{i=1}^{n+1}D_i\neq \emptyset.
\end{align*}
Suppose further that for every point 
 $Q\in  \bigcap_{i=1}^{n+1}D_i  $ and every subset $I \subset \{1,\ldots,n+1\}$, with $|I|=n$,  the following inequality holds:
 \begin{align}
\label{betacond}
\beta(D,(\bigcap_{i \in I}a_iD_i)_Q)>1.
\end{align}
Then  there exists a proper Zariski-closed subset $Z\subset X$ such that for any set $R\subset X(k)$ of $(\sum_{i=1}^{n+1}D_i, S)$-integral points   $ R\setminus Z$ is finite.
 \end{theorem}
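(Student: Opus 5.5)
We deduce Theorem~\ref{mainTheorem2} from Theorem~\ref{trungeneral}, applied to the divisors $D_i':=a_iD_i$, which are all numerically equivalent to $D$, with $A=D$. Since heights and proximity functions depend only on numerical class up to $O(1)$ on a projective variety (after passing to $N D$ and using boundedness of numerically trivial classes), we may assume $h_{D_i'}=h_D+O(\sqrt{h_D}+1)$. For $R$ a set of $(\sum D_i,S)$-integral points, every $P\in R$ satisfies $\sum_{v\in S}\lambda_{D_i',v}(P)=h_D(P)+o(h_D(P))$ for each $i$, since the counting function outside $S$ is bounded.

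The constants are computed as follows. For a single divisor $D_i'\equiv D$, we have $\beta(D,D_i')=1/(n+1)$ (the standard computation via the Riemann–Roch volume $\mathrm{vol}(D-tD_i')=(1-t)^n D^n$). For $Y$ we take the local pieces $Y_{I,Q}:=(\bigcap_{i\in I}D_i')_Q$. Since any $n$ of the $D_i'$ intersect properly, $\bigcap_{i\in I}D_i'$ is zero-dimensional, hence a finite union of points $Q$. By the Remark following Definition~\ref{regular chain}, $\bigcap_{i\in I'}D'_i\supset Y_{I,Q}$, with $I'=I\setminus\{j\}$ of size $n-1$, is a regular sequence, at least locally near $Q$, so the pairs $(I',Y_{I,Q})$ are admissible in $\mathcal M$. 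Hypothesis \eqref{betacond} gives $\beta(D,Y_{I,Q})=1+\delta_{I,Q}$ with $\delta_{I,Q}>0$, after shrinking we may take a common $\delta>0$. Near points $Q$ outside $\bigcap_{i=1}^{n+1}D_i$ we use instead the trivial Schmidt/Ru–Vojta bound.

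The key step is a place-by-place local estimate. Fix $P\in R$ and $v\in S$. Because any $n$ of the divisors intersect properly, at most $n$ of the $\lambda_{D_i',v}(P)$ can be large simultaneously. More precisely, there is an index $j_v$ with $\lambda_{D_{j_v}',v}(P)=O(1)$ unless $P$ is $v$-adically near a point $Q\in\bigcap_{i=1}^{n+1}D_i$. In the first case, Theorem~\ref{Ru-Vojta} for the remaining $n$ divisors gives a contribution of $\frac1{n+1}\sum_{i\neq j_v}\lambda_{D_i',v}$. In the second case, order the $\lambda_{D_i',v}(P)$ decreasingly as $i_1,\dots,i_{n+1}$ and choose $I'=\{i_1,\dots,i_{n-1}\}$ and $Y=(D'_{i_1}\cap\dots\cap D'_{i_n})_Q$. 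Then $\lambda_{Y,v}=\min_{k\le n}\lambda_{D'_{i_k},v}+O(1)=\lambda_{D'_{i_n},v}$, and the term in Theorem~\ref{trungeneral} equals
\[\tfrac1{n+1}\sum_{k<n}\lambda_{D'_{i_k},v}+\big(1+\delta-\tfrac{n-1}{n+1}\big)\lambda_{D'_{i_n},v}\ \ge\ \tfrac{1+\delta'}{n+1}\sum_{k\le n+1}\lambda_{D'_{i_k},v}\]
for some $\delta'>0$, using that $\lambda_{D'_{i_n},v}\ge\lambda_{D'_{i_{n+1}},v}$ and $2+(n+1)\delta\ge 2$. Summing over $v$ and over $i$ then yields a left side of at least $(1+\delta'')h_D(P)-o(h_D(P))$ in all cases. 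Here $\delta''>0$ requires care in the first case; if necessary one applies Theorem~\ref{trungeneral} with pairs for which $Y$ is empty, or simply uses the $\delta$-gain only at places of the second type. The cleanest route is to argue that integrality forces the total mass $\sum_v\sum_i\lambda_{D'_i,v}=(n+1)h_D$, while without the $\delta$-gain Theorem~\ref{Ru-Vojta} already gives a bound of $(1+\epsilon)h_D$ with the constant $1/(n+1)$. So I would combine the two, applying Theorem~\ref{trungeneral} with $\mathcal M$ including pairs of both types and $\epsilon<\delta'$, to get a contradiction $(1+\delta')h_D<(1+\epsilon)h_D$ off $Z$. That is, $h_D$ is bounded on $R\setminus Z$, and Northcott gives finiteness.

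The main obstacle is the uniform local estimate in the second case: showing that $\lambda_{Y,v}$ is comparable to the $n$-th largest $\lambda_{D_i',v}$. This needs the local ideal of $Y_Q$ to be exactly generated by the $n$ equations near $Q$, and the gain $\delta$ must dominate uniformly even when $P$ is near $Q$ but the $(n+1)$-st divisor's value is large. I would handle it by the standard compactness argument for Weil functions of intersections, $\lambda_{\cap D_i,v}=\min\lambda_{D_i,v}+O(1)$, on a finite open cover separating the finitely many points $Q$.
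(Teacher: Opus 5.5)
Your overall plan is the route the paper indicates: apply Theorem~\ref{trungeneral} with $A=D$ to $D_i'=a_iD_i$, using $\beta(D,D_i')=\frac1{n+1}$ and the pairs $(I\setminus\{j\},(\bigcap_{i\in I}D_i')_Q)$. The paper gives no more detail than ``immediate consequence of Theorem~\ref{trungeneral}.'' Your preliminary steps are fine: the value $\frac1{n+1}$, admissibility of the pairs, and a common $\delta$.

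The proof breaks at the decisive estimate. The left side of your display exceeds $\frac1{n+1}\sum_{k\le n+1}\lambda_{D'_{i_k},v}$ by exactly $\frac1{n+1}(\lambda_{D'_{i_n},v}-\lambda_{D'_{i_{n+1}},v})+\delta\,\lambda_{D'_{i_n},v}$. This excess is controlled by the $n$-th largest proximity only. Suppose $P$ is $v$-adically moderately close to $Q$ but much closer to $D'_{i_1}$. Then the excess stays bounded while $\sum_k\lambda_{D'_{i_k},v}$ is huge, so no $\delta'>0$ works.

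At places of your ``first case'' there is no excess at all. Invoking Theorem~\ref{Ru-Vojta} at individual places is also not legitimate. You have a single inequality: a sum over $S$ of a max over one fixed finite $\mathcal M$. Even to avoid a loss at such places you need $\beta(D,(\bigcap_{i\in I}D_i')_R)\ge\frac n{n+1}$ at the other points $R$ of the $n$-fold intersections, which you do not address.

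Consequently, summing over $v\in S$ and using integrality, your place-by-place analysis yields at best an inequality of the shape
\[
h_D(P)+\delta\sum_{v\in S}\lambda^{(n)}_v(P)\le(1+\varepsilon)h_D(P)+o(h_D(P)),
\]
where $\lambda^{(n)}_v(P)$ is the $n$-th largest of the values $\lambda_{D_i',v}(P)$. This is only a GCD-type bound, $\sum_{v\in S}\lambda^{(n)}_v(P)\le(\varepsilon/\delta)h_D(P)$. It says nothing about integral points that, at every place, approach fewer than $n$ of the $D_i'$ at a time.

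Your suggested repairs do not change this. Pairs with empty $Y$ are not admissible, since $Y$ must be nonempty and $\beta(A,\emptyset)$ is infinite. With $A=D$, every admissible pair gives each $\lambda_{D_i',v}$ exactly the borderline coefficient $\beta(D,D_i')=\frac1{n+1}$. The $Y$-term helps only when $P$ is $v$-adically near one of the finitely many points occurring in $\mathcal M$, and that is precisely the situation the inequality itself forces to be rare.

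So a genuinely new ingredient is missing: a mechanism that yields a gain proportional to $h_D(P)$ at places where fewer than $n$ of the divisors are close to $P$. Compare the proof of Theorem~\ref{mainTheoremCZrev}, where such a gain comes from blowing up $Q$ and taking $A=\ell\sum_i\tilde D_i+E$, so that $\beta(A,\tilde D_i)>\ell$. Nothing playing this role appears in your argument.
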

We recall the following remark from \cite{WX2025}.
\begin{remark}\label{localcondition}
\begin{enumerate}
\item The condition \eqref{betacond} on the $\beta$-constant is sharp.
\item The inequality \eqref{betacond}  can be replaced by the following condition involving local multiplicities:
 \begin{align}\label{localmultip} 
   (D_i)^I_Q<\bigg(\frac{n}{n+1}\bigg)^n (D_i)^I, 
\end{align}
where $(D_i)^I_Q$ denotes the local intersection multiplicity of $D_i$ (for  $i\in I$) at the point  $Q$.  \end{enumerate}
\end{remark}

We now recall the notion of transverse intersection of divisors, which imposes a stronger geometric condition than that required in the theorem above.
 
Let $X$ be a projective variety and let $Y \subset X$ be a closed subscheme. 
For a point $P \in Y$, denote by $T_{Y,P}$ the Zariski tangent space of $Y$ at $P$.
\begin{definition}\label{transversal}
Let $D_1,\ldots,D_{n}$ be reduced and irreducible divisors on a  projective variety $X$ of dimension $n$.
Let $P\in \bigcap_{i=1}^{n} D_i$.
We say that $D_1,\ldots,D_{n}$ intersect transversally at $P$ if $X$ is smooth at $P$ and
\[
\operatorname{codim}_{T_{X,P}}
\left(\bigcap_{i=1}^n T_{{ D_i},P}\right)=n.
\]
\end{definition}
 
  In the setting of transverse intersections, Corvaja and Zannier established the degeneracy of integral points for certain configurations of non-normal crossing divisors on surfaces.

\begin{theorem}[{ \cite[Corollary 1.2]{CZ2006}}]\label{CZ2006}
    Let $X$ be a projective surface over a number field $k$. Let $D_1,D_2,D_3$ be distinct, effective, irreducible, and numerically equivalent divisors on $X$. Suppose that $D_1 \cap D_2 \cap D_3$ consists of a single point, at which each pair of the divisors $D_i$ intersects transversally, and that $D_i\cdot D_j >1$ for some $i\neq j$. Then no set of $(D_1+D_2+D_3,S)$-integral points in $X(k)$ is Zariski dense in $X$.
\end{theorem}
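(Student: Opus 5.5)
We plan to derive Theorem~\ref{CZ2006} from Theorem~\ref{Ru-Vojta} (rather than from Theorem~\ref{trungeneral}), since the hypothesis here is transversality, not a $\beta$-bound at the point.

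\emph{Setup and reductions.} Write $Q$ for the unique point of $D_1\cap D_2\cap D_3$, and set $d=D_i^2$ and $m=D_i\cdot D_j$; these are independent of $i,j$ by numerical equivalence, and $m>1$ by hypothesis. Since the $D_i$ are distinct and irreducible, $m>0$, so each $D_i$ is nef, and by Hodge index $d\ge m\ge 2$; in particular $D:=D_1$ is big, with $D_i\equiv D$. If $R$ is a set of $(D_1+D_2+D_3,S)$-integral points, then for $P\in R$ we have $\sum_{v\in S}\lambda_{D_i,v}(P)=h_{D}(P)+O(1)$ for each $i$. So it suffices to find $c>3$ and a proper closed $Z$ with
\[
\sum_{v\in S}\max_{\text{admissible } \sigma}\sum_{i}c_i\lambda_{D_i,v}(P)\le c\, h_D(P)
\]
off $Z$, such that the left side is $\ge$ a constant exceeding $c$ times $h_D$ when summed. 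We will organize this by looking, for each $v$, at which divisors $P$ is $v$-adically close to.

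\emph{Local analysis.} Away from $Q$, at most two $D_i$ meet, and pairwise intersections are proper. Choose a closed subscheme $Y$ and apply Theorem~\ref{Ru-Vojta} to a suitable blow-up. Concretely, let $\pi:\tilde X\to X$ be the blow-up at $Q$ with exceptional divisor $E$. Since each pair of the $D_i$ is transverse at $Q$, each $D_i$ is smooth at $Q$, so $\pi^*D_i=\tilde D_i+E$. The strict transforms $\tilde D_1,\tilde D_2,\tilde D_3$ and $E$ meet pairwise properly, and no three of them share a point, because the tangent directions at $Q$ are distinct. Consider the four divisors $\tilde D_1,\tilde D_2,\tilde D_3,E$, with local Weil functions satisfying $\lambda_{D_i}\circ\pi=\lambda_{\tilde D_i}+\lambda_E$. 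Then apply Theorem~\ref{Ru-Vojta} with $\mathcal L=\pi^*\mathcal O(N D)$, which is big on $\tilde X$. This gives
\[
\sum_i \beta(\pi^*D,\tilde D_i)\,m_{\tilde D_i}+\beta(\pi^*D,E)\,m_E\le(1+\epsilon)h_D.
\]

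\emph{The $\beta$ computation (main obstacle).} We need lower bounds for $\beta_1:=\beta(\pi^*D,\tilde D_i)=\beta(\pi^*D,\pi^*D_i-E)$ and $\beta_E:=\beta(\pi^*D,E)$. We estimate them via the standard formula $\beta(L,F)\ge\frac1{L^2}\int_0^{\infty}\operatorname{vol}(L-tF)\,dt$, using Riemann--Roch and Zariski-decomposition-type bounds on a surface. For $E$, we use $\operatorname{vol}(\pi^*D-tE)\ge d-t^2$, valid for small $t$. For $\tilde D_i$, we use that $\pi^*D-t\tilde D_i\equiv(1-t)\pi^*D+tE$, whose volume is at least $(1-t)^2d$, with further gain coming from $E$. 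Summing with integrality gives
\[
\sum_i m_{D_i}=\sum_i m_{\tilde D_i}+3m_E \quad\text{(on } R\text{, equal to } 3h_D).
\]
We therefore need a positive combination showing $\beta_1>1$ or $\beta_E/3+\dots$ suitably large. The plan is to show
\[
\min(\beta_1,\ \beta_E/3)\ \ge\ \gamma>1 \quad\text{when } m\ge2 .
\]
Then the left side is at least $\gamma\sum_i m_{D_i}=3\gamma h_D$, which exceeds $(1+\epsilon)h_D$. Note that the factor actually needed is $>1/3$ per unit of $h_D$, i.e.\ we need $\gamma\cdot 3>1+\epsilon$ relative to $h_D$, but each $m_{D_i}$ equals $h_D$, so the requirement is really $\gamma>1/3$. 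This is comfortably achievable: $\beta(\pi^*D,\tilde D_i)\ge\frac1d\int_0^1(1-t)^2d\,dt=1/3$, and the strict inequality comes from the $E$-correction $\pi^*D-t\tilde D_i=(1-t)\pi^*D+tE$, whose volume strictly exceeds $(1-t)^2d$ because $\tilde D_j\cdot\tilde D_i=m-1\ge1$ allows subtracting beyond $t=1$: $\pi^*D-\tilde D_i\equiv \tilde D_j+E$ is effective and nonzero. Verifying this strictness precisely, using $m>1$, is the delicate step. Finally we push $Z$ down to $X$ to conclude that $R$ is not Zariski dense.
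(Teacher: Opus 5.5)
\textbf{There is a genuine gap.} It sits exactly at the step you call delicate: with $\mathcal L=\pi^*D$, the inequality $\beta(\pi^*D,\tilde D_i)>1/3$ is false.

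Since $D\equiv D_i$, we have
\[
\pi^*D-t\tilde D_i\equiv\pi^*\big((1-t)D\big)+tE .
\]
Because $Q$ is a smooth point, $h^0(\tilde X,\pi^*M+aE)=h^0(X,M)$ for $a\ge 0$. Hence $\operatorname{vol}(\pi^*D-t\tilde D_i)=(1-t)^2d$ exactly for $0\le t\le 1$. For $t>1$ the class has degree $(1-t)d<0$ against the nef class $\pi^*D$, so it is not pseudo-effective. Therefore $\beta(\pi^*D,\tilde D_i)=\frac1d\int_0^1(1-t)^2d\,dt=\frac13$ on the nose.

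Your justification rests on the identity $\pi^*D-\tilde D_i\equiv\tilde D_j+E$, which is wrong. In fact $\tilde D_j+E=\pi^*D_j\equiv\pi^*D$, and the correct statement is $\pi^*D-\tilde D_i\equiv E$, which has volume $0$.

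With $\beta_1=1/3$, Ru--Vojta on integral points reduces to
\[
h_D+(\beta_E-1)h_E\le(1+\epsilon)h_D+O(1).
\]
This is at best a gcd-type bound $h_E\le\frac{\epsilon}{\beta_E-1}h_D$, and never a bound on $h_D$.

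Moreover, $\beta(\pi^*D,E)>1$ is not available in general when $d=2$. Your estimate gives only $\frac23\sqrt2<1$. Concretely, take $X=\mathbb P^1\times\mathbb P^1$ and three generic $(1,1)$-conics through $Q$; these satisfy all the hypotheses. Then $\operatorname{vol}(\pi^*D-tE)$ equals $2-t^2$ on $[0,1]$ and $(2-t)^2$ on $[1,2]$, so $\beta(\pi^*D,E)=1$ exactly. In that case your inequality reads $h_D\le(1+\epsilon)h_D$, which is empty.

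\textbf{The missing idea: the polarization on $\tilde X$ must not be pulled back from $X$.} You need a big $A$ whose coefficient on each boundary component $\tilde D_1,\tilde D_2,\tilde D_3,E$ is strictly beaten by the corresponding $\beta$. The class $\pi^*D\equiv\frac13\sum\tilde D_i+E$ only achieves equality on the $\tilde D_i$.

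The paper handles this in the proof of Theorem~\ref{mainTheoremCZrev}, whose $n=2$ case is this setting. It takes
\[
A=\ell(\tilde D_1+\tilde D_2+\tilde D_3)+E\equiv 3\ell\pi^*D-(3\ell-1)E .
\]
Then $NA-m\tilde D_i\equiv N\pi^*D+(3\ell N-N-m)(\pi^*D-E)$ is nef for $m\le(3\ell-1)N$. Its self-intersection is $(3\ell N-m)^2d-(3\ell N-N-m)^2$. Integrating gives
\[
\beta(A,\tilde D_i)-\ell\ \ge\ \frac{(3\ell-1)^2-d}{3A^2}\ >\ 0\qquad(\ell\gg0).
\]
Separately, $\beta(A,E)>1$ follows from $d\ge2$.

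Ru--Vojta on integral points then yields
\[
\sum_i\big(\beta(A,\tilde D_i)-(1+\epsilon)\ell\big)h_{\tilde D_i}+\big(\beta(A,E)-1-\epsilon\big)h_E\le O(1),
\]
which bounds $h_{\pi^*D}=h_{\tilde D_i}+h_E$ off a proper closed set.

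On a surface, nefness of $\tilde D_i$ is automatic, since $\tilde D_i$ is irreducible with $\tilde D_i^2=d-1\ge1$. So this argument needs neither ampleness nor base-point-freeness of the $D_i$.
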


The following theorem provides a higher-dimensional analogue of the preceding result. Its proof applies Theorem~\ref{Ru-Vojta} to the blowup of $X$ along the intersection points of the $n+1$ divisors, while 
 Theorem~\ref{mainTheorem2} follows from a direct application of Theorem~\ref{trungeneral} to the variety $X$. 
 
  \begin{theorem}\label{mainTheoremCZrev}
Let $X$ be a  projective variety  of dimension $n$ over a number field $k$. 
Let $D_1, \ldots,D_{n+1}$ be reduced, irreducible, base point free divisors on $X$, all numerically equivalent to an ample divisor $D$.  
Assume that  
$\bigcap_{i=1}^{n+1}D_i $
is supported at a single point $Q$, and that
   any $n$-tuple among  $D_1, \ldots,D_{n+1}$ intersect  transversally at $Q$.
Suppose further that  
 $ D^n > 1$. Then  there exists a proper Zariski-closed subset $Z\subset X$ such that for any set $\mathcal R\subset X(k)$ of $(\sum_{i=1}^{n+1}D_i, S)$-integral points, the set   $\mathcal  R\setminus Z$ is finite.
 \end{theorem}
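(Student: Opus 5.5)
Following the hint in the text, we apply Theorem~\ref{Ru-Vojta} on the blowup $\pi:\tilde X\to X$ at the point $Q$, with exceptional divisor $E$. Since the $D_i$ are base point free and each $n$-tuple meets transversally at $Q$, the point $Q$ is a smooth point of $X$ and each $D_i$ is smooth at $Q$. Hence $\pi^*D_i=\tilde D_i+E$, where $\tilde D_i$ is the strict transform. Transversality of each $n$-tuple at $Q$ means the tangent hyperplanes $T_{D_i,Q}$, for $i\in I$ with $|I|=n$, are independent. So any $n$ of the $\tilde D_i$ have no common point on $E\cong\mathbb P^{n-1}$. Away from $E$, the intersection $\bigcap_{i=1}^{n+1}D_i$ is empty. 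Therefore $\tilde D_1,\dots,\tilde D_{n+1},E$ should be checked to intersect properly on $\tilde X$. This needs local regular-sequence verification near $E$, but at points of $E$ at most $n-1$ of the $\tilde D_i$ pass, together with $E$, transversally. An $(\sum D_i,S)$-integral point $P\notin \Supp\sum D_i$ lifts to $\tilde P$ with $\sum_i m_{D_i,S}(P)=h_D$-type sums. More precisely, functoriality gives $m_{\pi^*D_i,S}(\tilde P)=m_{\tilde D_i,S}(\tilde P)+m_{E,S}(\tilde P)+O(1)$, and integrality gives $\sum_i m_{D_i,S}(P)=\sum_i h_{D_i}(P)+O(1)=(n+1)h_D(P)+O(1)$.

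Take $\mathcal L=\pi^*D-\delta E$ for small rational $\delta>0$, which is big (indeed ample for small $\delta$). Apply Theorem~\ref{Ru-Vojta} to the properly intersecting divisors $\tilde D_1,\dots,\tilde D_{n+1}$ and, where needed, $E$. The key computation is to bound the $\beta$-constants from below. For $\tilde D_i\sim \pi^*D-E$, the filtration $h^0(N\mathcal L-m\tilde D_i)$ is estimated by asymptotic Riemann--Roch, using ampleness of $\pi^*D-tE$ for $t<\epsilon(D,Q)$ and the volume $(D-tE)^n=D^n-t^n$ on the blowup. Because $D$ is base point free and $D_i$ pass through $Q$ transversally, the Seshadri constant at $Q$ is at least $1$, so the line bundles $\pi^*D-tE$ stay nef for $t\le1$. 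This gives
\[
\beta(\mathcal L,\tilde D_i)\ \ge\ \frac{1}{n+1}+c(\delta),\qquad c(\delta)>0,
\]
at least when $D^n>1$. The intuition is that $\tilde D_i$ is ``smaller'' than $\pi^*D$, while the normalizing $h_{\mathcal L}$ is also smaller by the $E$-part. One then writes $h_{\mathcal L}(\tilde P)=h_D(P)-\delta h_E(\tilde P)$ and $h_E\ge m_{E,S}$ up to bounded terms (or uses $\beta(\mathcal L,E)$ as an extra term). Summing the Ru--Vojta inequality gives
\[
\sum_i\beta(\mathcal L,\tilde D_i)\bigl((n+1)h_D-m_E\cdot\ldots\bigr)\le(1+\epsilon)\bigl(h_D-\delta h_E\bigr),
\]
which should reduce to $(1+c')h_D\le(1+\epsilon)h_D$ off a proper closed set. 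This forces finiteness of points outside $\pi(\tilde Z)\cup Q$.

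The main obstacle is the precise $\beta$ computation. One must choose $\delta$, and possibly the weight on $E$, so that the $E$ contributions cancel favorably. The contribution of $E$ appears in $m_{\pi^*D_i}$ with total weight $(n+1)$ across the sum, but only with weight $\delta$ in the height. I would first try $\mathcal L=\pi^*D-E$ itself, which is nef and, when $D^n>1$, big since $(\pi^*D-E)^n=D^n-1>0$. With this choice $\tilde D_i\sim\mathcal L$, so $\beta(\mathcal L,\tilde D_i)$ is at least the standard $\frac{1}{n+1}\cdot\frac{\int_0^1 \mathrm{vol}(\mathcal L-t\tilde D_i)\,dt\cdot(n+1)}{\mathrm{vol}}$, which equals $1/(n+1)$ by homogeneity. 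That alone gives only equality, so the improvement must come from including $E$ with positive $\beta(\mathcal L,E)$. Concretely, $\mathcal L-tE=\pi^*D-(1+t)E$ remains big for small $t$ since $D^n>1$, giving $\beta(\mathcal L,E)>0$. On integral points, $\sum_i m_{\tilde D_i}=(n+1)h_{\mathcal L}+(n+1)m_E-(n+1)h_E\cdot0$ up to $O(1)$, which must be reconciled carefully with $h_{\mathcal L}=h_D-h_E$. Verifying that the combined coefficient exceeds $1$ strictly, using $D^n>1$, is the crux.
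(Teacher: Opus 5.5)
Your setup matches the paper's: blow up $Q$, write $\pi^*D_i=\tilde D_i+E$, apply Theorem~\ref{Ru-Vojta} to $\tilde D_1,\dots,\tilde D_{n+1},E$, and use that integrality of $\pi(P)$ gives $m_{\tilde D_i,S}=h_{\tilde D_i}+O(1)$ and $m_{E,S}=h_E+O(1)$. But there is a genuine gap. The decisive step, choosing $\mathcal L$ so that $\sum_j\beta_jY_j-(1+\epsilon)\mathcal L$ becomes big, is left as ``the crux,'' and neither of your concrete choices works.

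For $\mathcal L=\pi^*D-E$ you have $\tilde D_i\equiv\mathcal L$, so $\beta(\mathcal L,\tilde D_i)=\frac1{n+1}$ exactly. On integral points Ru--Vojta then only yields $\beta(\mathcal L,E)\,h_E\le\epsilon\, h_{\mathcal L}+o(h_{\mathcal L})$, which is a gcd-type bound, not finiteness.

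For $\mathcal L_\delta=\pi^*D-\delta E$ with $\delta$ small, your own computation gives $\sum_i\beta(\mathcal L_\delta,\tilde D_i)-1=\frac{\delta^n(1-\delta)}{D^n-\delta^n}=O(\delta^n)$. The relevant class is
\[
\sum_i\beta_i\tilde D_i+\beta_E E-\mathcal L_\delta\equiv\Bigl(\sum_i\beta_i-1\Bigr)\pi^*D+\Bigl(\beta_E+\delta-\sum_i\beta_i\Bigr)E .
\]
The only general input, $\mathrm{vol}(\pi^*D-sE)\ge D^n-s^n$, certifies an $E$-coefficient tending to $\frac{n}{n+1}\sqrt[n]{D^n}-1$ as $\delta\to 0$. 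This is negative whenever $D^n<(1+\frac1n)^n$, for example $D^n=2$ with $n\ge2$. A class $a\pi^*D-bE$ with $a=O(\delta^n)$ and $b$ bounded away from $0$ is not big, so your reduction to $(1+c')h_D\le(1+\epsilon)h_D$ does not follow.

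The missing idea is to take $\delta<1$ but close to $1$. The paper uses
\[
A=\ell(\tilde D_1+\cdots+\tilde D_{n+1})+E\equiv\ell(n+1)\pi^*D-(\ell(n+1)-1)E,
\]
that is, $A\equiv\ell(n+1)\mathcal L_\delta$ with $\delta=1-\frac{1}{\ell(n+1)}$. It then shows that each $\beta$ beats the multiplicity of its divisor in $A$:
\begin{itemize}
\item $\beta(A,\tilde D_i)=\ell(n+1)\,\beta(\mathcal L_\delta,\tilde D_i)>\ell$. This is exactly your $c(\delta)>0$ rescaled, and uses nefness of $\pi^*D-tE$ for $t\le1$.
\item $\beta(A,E)>1$ for $\ell\gg0$. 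Here $\beta(\mathcal L_\delta,E)\ge\frac{\int_\delta^{c}(c^n-s^n)\,ds}{c^n-\delta^n}$ with $c=\sqrt[n]{D^n}>1$ stays bounded below as $\delta\to1$, while the required threshold $1-\delta=\frac1{\ell(n+1)}$ tends to $0$. This is where $D^n>1$ is used.
\end{itemize}
Ru--Vojta together with integrality then gives
\[
\sum_i\bigl(\beta(A,\tilde D_i)-(1+\epsilon)\ell\bigr)h_{\tilde D_i}+\bigl(\beta(A,E)-1-\epsilon\bigr)h_E\le O(1),
\]
with strictly positive coefficients. This bounds $h_{\tilde D_i}$ and $h_E$, hence $h_D(\pi(P))=h_{\tilde D_i}(P)+h_E(P)+O(1)$, off a proper closed subset, and Northcott finishes.
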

\begin{remark}\label{localcondition2}
Under assumptions of the theorem, the condition $ D^n > 1 $  is equivalent to the existence of an index set $I_0 \subset \{1,\ldots,n+1\}$ with $|I_0|=n$ such that the intersection $\bigcap_{i \in I_0} D_i$ contains more than one point. 
\end{remark}

Finally, we present a GCD-type result as an application of Theorem~\ref{trungeneral}. This result generalizes the surface case of \cite[Theorem 5.3]{HLX}; we omit the proof here, as its complex counterpart has already been established in \cite{WX2025}. For a comprehensive overview of various GCD estimates, we refer the reader to \cite{HLX}, which includes discussions on the work of Levin \cite{LevinGCD}, Levin and Wang \cite{LW}, Wang and Yasufuku \cite{WY}, and Huang and Levin \cite{HL}.

\begin{theorem}[cf. {\cite[Theorems 1.10]{WX2025}}]\label{MainThmgcd}
Let $D_1, \ldots,D_{n+1}$ be effective  divisors intersecting properly on a  projective variety $X$ of dimension $n$ over a number field $k$. Suppose that there exist positive integers $a_1,\ldots, a_{n+1}$ such that $a_1D_1, \ldots, a_{n+1}D_{n+1}$ are all numerically equivalent to an ample divisor $D$.  
Suppose that for some index set $I_0 \subset \{1,\ldots,n+1\}$ with $|I_0|=n$, the intersection $\bigcap_{i\in I_0}D_i$ contains more than one point. 
Let $S$ be a finite set of places of $k$ containing all the archimedean ones and let $\varepsilon > 0$. Then  there exists a proper Zariski-closed subset $Z\subset X$ such that for any set $R\subset X(k)$ of $(\sum_{i=1}^{n+1}D_i, S)$-integral points, we have 
\begin{align*}
h_{\bigcap_{i \in I_0}a_iD_i}(P) 
        \le \varepsilon h_D(P)  
\end{align*}
for all but finitely many points $P\in R\setminus Z$.
\end{theorem}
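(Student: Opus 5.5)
We will derive this from Theorem~\ref{trungeneral}, following the surface argument of \cite[Theorem 5.3]{HLX}. Write $Y_0=\bigcap_{i\in I_0}D_i$ and let $j_0$ be the single index not in $I_0$. Since we may replace $D_i$ by $a_iD_i$ (intersection, integrality and the height $h_{\bigcap a_iD_i}$ are all compatible with this), we may assume every $D_i\equiv D$. Because the divisors intersect properly and $\dim X=n$, $Y_0$ is a zero-dimensional scheme of length $D^n$, and it contains at least two points. Fix a point $Q\in \Supp Y_0$ and let $Y_Q$ be the component of $Y_0$ supported at $Q$.

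\textbf{Estimating the $\beta$-constants.} Take $\mathcal M$ to consist of pairs $(I,Y)$ with $I\subset I_0$, $|I|=n-1$, and $Y=(Y_0)_{Q}$ for $Q\in\Supp Y_0$. The chain $\bigcap_{i\in I}D_i\supset Y$ is regular: $\bigcap_{i\in I}D_i$ is a complete intersection curve and $Y_Q$ is cut out near $Q$ by the remaining equation, and is the whole local scheme there. We have $\beta(D,D_i)=1/(n+1)$ (standard for a divisor numerically equivalent to $D$). For $\beta(D,Y_Q)$ we use the local count: sections of $ND$ vanishing to order $m$ along a complete-intersection ideal of colength $\ell$ impose at most roughly $\ell\, m^n/n!$ conditions. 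Since $Y_0$ has at least two points, $\ell=\mathrm{len}(Y_Q)\le D^n-1<D^n$, which gives $\beta(D,Y_Q)\ge \frac{n}{n+1}(D^n/\ell)^{1/n}>\frac{n}{n+1}$ strictly. Concretely, $\beta(D,Y_Q)=\frac{n}{n+1}+\delta$ with $\delta>0$ depending only on the geometry. This is exactly the mechanism behind Remark~\ref{localcondition}(2).

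\textbf{Arithmetic step.} For $P$ an $S$-integral point, $\lambda_{D_i,v}(P)=0$ for $v\notin S$. Hence $\sum_{v\in S}\lambda_{D_i,v}(P)=h_D(P)+O(1)$ for each $i$, and the same holds for $Y_Q$: $\sum_{v\notin S}\lambda_{Y_Q,v}(P)=0$. For each $v\in S$ we choose $I\subset I_0$ by omitting the index $i\in I_0$ with the smallest $\lambda_{D_i,v}(P)$. Since $\lambda_{Y_0,v}=\min_{i\in I_0}\lambda_{D_i,v}$ up to $O(1)$, and $\lambda_{Y_0,v}=\max_Q\lambda_{Y_Q,v}+O(1)$ (disjoint supports), the term inside the max of Theorem~\ref{trungeneral} becomes
\[
\frac1{n+1}\sum_{i\in I_0}\lambda_{D_i,v}(P)+\delta\,\lambda_{Y_0,v}(P)+O(1).
\]
This uses $\beta(A,Y)-\sum_{i\in I}\beta=\frac{n}{n+1}+\delta-\frac{n-1}{n+1}=\frac1{n+1}+\delta$. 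Summing over $v\in S$ gives $\frac{n}{n+1}h_D(P)+\delta\, h_{Y_0}(P)$. This is not yet enough, since the right-hand side is only $(1+\varepsilon)h_D$. We must also exploit $D_{j_0}$, whose contribution is missing.

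\textbf{Main obstacle.} The difficulty is incorporating $D_{j_0}$ so that the main term reaches $h_D$. The index sets in Theorem~\ref{trungeneral} have size $n-1$, so we would instead choose, at each $v$, the pair $(I,Y)$ adapted to the $n$ divisors closest to $P$ among all $n+1$. If $P$ is $v$-adically near the point set $Y_0$, we use $I\subset I_0$ as above. Otherwise at most $n-1$ of the $D_i$ are $v$-close to $P$ (properness of the $n+1$ divisors: $\bigcap_{i=1}^{n+1}D_i$ must be treated via a $Y$ chosen in other $n$-fold intersections too). In that case we use those $n-1$ divisors plus a degenerate $Y$ term, and we expect a bound of shape $\sum_i\frac1{n+1}\lambda_{D_i,v}\ge$ the full contribution up to the omitted smallest term. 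Theorem~\ref{trungeneral} then yields
\[
\delta\, h_{Y_0}(P)\le \varepsilon' h_D(P)+O(1)
\]
outside a proper closed $Z$. Rescaling $\varepsilon'$ gives the claim. Making this bookkeeping rigorous, especially controlling the smallest omitted $\lambda_{D_i,v}$ via properness, is the step we expect to be delicate. We would model it on the complex proof in \cite{WX2025}.
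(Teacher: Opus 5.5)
You have the right framework, and it is the intended one: the paper omits this proof and presents the statement as an application of Theorem~\ref{trungeneral}. That means taking $A=D$ after normalizing to $a_i=1$, using $\beta(D,D_i)=\frac1{n+1}$, and using the local bound $\beta(D,(Y_0)_Q)\ge\frac{n}{n+1}(D^n/\ell_Q)^{1/n}>\frac{n}{n+1}$. However, there is a genuine gap exactly at the step you call the main obstacle, and the patch you sketch there is wrong.

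As you note, restricting $\mathcal M$ to $I\subset I_0$ and $Y\subset Y_0$ only gives $\frac{n}{n+1}h_D(P)+\delta h_{Y_0}(P)$ on the left. That yields roughly $h_{Y_0}\le\frac{1/(n+1)+\varepsilon}{\delta}h_D$, not $\varepsilon h_D$, so pairs involving $D_{j_0}$ are indispensable. Your case split for them fails. If $P$ is not $v$-adically near $Y_0$, it does \emph{not} follow that at most $n-1$ of the $D_i$ are $v$-close to $P$. Indeed $P$ may be $v$-adically close to a point of $\bigcap_{i\in I'}D_i$ for another $n$-subset $I'\ni j_0$; such points never lie in $Y_0$, because properness forces $\bigcap_{i=1}^{n+1}D_i=\emptyset$. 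At such places, keeping only $n-1$ divisors with a ``degenerate'' $Y$ throws away $\frac1{n+1}\lambda_{D_i,v}(P)$ for the $n$-th closest divisor. After summing over $S$ this can be a fixed proportion of $h_D(P)$, for example when $P$ is close to such a point at one place of $S$ and close to the remaining divisor at another. The main term then again falls short of $h_D(P)$.

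The missing idea is to let $\mathcal M$ contain all pairs $\bigl(I,(\bigcap_{i\in I'}D_i)_Q\bigr)$, where $I'$ ranges over all $n$-subsets of $\{1,\ldots,n+1\}$, $I\subset I'$ has $|I|=n-1$, and $Q\in\Supp\bigcap_{i\in I'}D_i$. These chains are regular by properness. Since $\ell_Q\le D^n$, your local lemma gives $\beta(D,(\bigcap_{i\in I'}D_i)_Q)\ge\frac{n}{n+1}$ for all of them; the strict bound $\ge\frac{n}{n+1}+\delta$ is needed only for $I'=I_0$.

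Now fix $v\in S$. The quantity $\min_{1\le i\le n+1}\lambda_{D_i,v}(P)$ is bounded above by an $M_k$-constant. Choose the pair as follows:
\begin{itemize}
\item let $I'$ index the $n$ largest values $\lambda_{D_i,v}(P)$;
\item let $I$ omit the smallest of these;
\item choose $Q$ maximizing $\lambda_{(\bigcap_{I'}D_i)_Q,v}(P)$, so that $\lambda_{Y,v}(P)=\min_{i\in I'}\lambda_{D_i,v}(P)+O(1)$.
\end{itemize}
The coefficient of $\lambda_{Y,v}$ is then at least $\frac1{n+1}$, so the term is at least $\frac1{n+1}\sum_{i=1}^{n+1}\lambda_{D_i,v}(P)+\delta\lambda_{Y_0,v}(P)-O(1)$. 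For $I'=I_0$ this is your computation. For $I'\ne I_0$, the index outside $I'$ carries the overall minimum and lies in $I_0$, so $\lambda_{Y_0,v}(P)$ is bounded above and the $\delta$-term costs nothing.

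Summing over $S$ and using integrality gives $h_D(P)+\delta h_{Y_0}(P)\le(1+\varepsilon)h_D(P)+o(h_D(P))$, which is the claim after rescaling $\varepsilon$. One minor point: since $D_i$ is only numerically equivalent to $D$, you have $h_{D_i}=h_D+o(h_D)$ rather than $h_D+O(1)$, but this is harmless here.
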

\subsection{Applications}
In this subsection, we provide applications for each of the main theorems established in Section~\ref{mainThm}.

First, we study general cases of  one-parameter family of unit equations as application of Theorem~\ref{mainTheorem2} and Theorem~\ref{mainTheoremCZrev}. Previous works on one-parameter families of unit equations have mainly focused on the specific form
\[f_1(t)u+f_2(t)v=f_3(t),\quad t\in\mathcal O_S, u,v\in \mathcal O_S^*, \]
where $f_1,f_2,f_3$ are polynomial over a number field $k$.
This equation was studied by Corvaja and Zannier (\cite{CZ2006}, \cite{CZ2010}) when $\deg f_1=\deg f_2=\deg f_3$,  by Levin (\cite{Levin06}) when $\deg f_1+\deg f_2=\deg f_3$, and under various other degree conditions in \cite{HLX}.

By applying Theorem~\ref{mainTheorem2}, we obtain the following extension of \cite[Theorem 1.6]{HLX}.
 
\begin{theorem}\label{unit1}
    Let $f_1,\ldots,f_{n+1} \in k[t]$ be non-constant polynomials of degrees $d_1,\ldots,d_{n+1}$ respectively,  without a common zero. Suppose that 
    \begin{equation}\label{degrees}
     \max_i (d_i+1) <   \frac{(n+1)^n}{(n+1)^n-n^n} \min_i d_i.
    \end{equation}
    Let  $V$  be the hypersurface  in $\mathbb A^{n+1}$ defined by the zero locus of  
   \begin{equation}\label{funiteq1}
        f_1(x_0)x_1 + \cdots + f_n(x_0)x_n=f_{n+1}(x_0).
    \end{equation} 
  Then there exists a proper Zariski closed subset $Z\subset V$    such that all but finitely many solutions $(t,u_1,\hdots,u_{n-1})\in \mathcal O_S\times  (\mathcal O_S^*)^n$ of  equation  \eqref{funiteq1}
are contained in $Z$.  
 \end{theorem}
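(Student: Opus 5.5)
The plan is to realize the solutions as integral points on a suitable projective compactification of $V$, and then apply Theorem~\ref{mainTheorem2} with all $a_i=1$. The $\beta$-condition \eqref{betacond} will be checked through the local multiplicity criterion \eqref{localmultip} of Remark~\ref{localcondition}.

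\textbf{The compactification and the divisors.} Put $d=\max_i d_i$. Homogenize \eqref{funiteq1} in $\mathbb P^{n+1}$, with coordinates $[z:x_0:x_1:\cdots:x_n]$, to the degree $d+1$ form
\[
F=\sum_{i=1}^n F_i(z,x_0)\,z^{d-d_i}x_i - F_{n+1}(z,x_0)\,z^{d+1-d_{n+1}},
\]
where $F_i$ is the homogenization of $f_i$. Let $X$ be the closure of $V$, i.e.\ the hypersurface $F=0$; its affine part $z\neq 0$ is $V$. The first thing to check is that $X$ is irreducible. Since $F$ is linear in $x_1,\ldots,x_n$ and the $f_i$ have no common zero, a factorization would force a common factor of the $f_i$, so this should be routine.

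Take the $n+1$ boundary divisors to be the hyperplane sections
\[
D_i=X\cap\{x_i=0\}\quad (1\le i\le n),\qquad D_{n+1}=X\cap\{z=0\}.
\]
These are Cartier and all linearly equivalent to the ample class $D=\mathcal O_X(1)$, with $D^n=d+1$. A solution $(t,u_1,\ldots,u_n)\in\mathcal O_S\times(\mathcal O_S^*)^n$ gives the point $[1:t:u_1:\cdots:u_n]$. This point is $(D_{n+1},S)$-integral because $t\in\mathcal O_S$, and it is $(D_i,S)$-integral because $u_i$ and $u_i^{-1}$ are both $S$-integral. So the solutions form a set of $(\sum D_i,S)$-integral points, possibly after enlarging $S$ to absorb the coefficients, which is harmless.

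\textbf{Intersection hypotheses.} Next I verify the hypotheses of Theorem~\ref{mainTheorem2}.

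1. Setting $x_1=\cdots=x_n=z=0$ in $F$ kills every term, because each term contains some $x_i$ or a positive power of $z$ (note $d+1-d_{n+1}\ge1$). Hence $\bigcap_{i=1}^{n+1}D_i=\{Q\}$ with $Q=[0:1:0:\cdots:0]$, which is nonempty.
2. For proper intersection of each $n$-tuple, I check that every such intersection is finite. Intersecting with $x_1=\cdots=x_n=0$ leaves $F_{n+1}(z,x_0)z^{d+1-d_{n+1}}=0$ on a line, which is finitely many points. Dropping one $x_j$ and adding $z=0$ leaves $x_0^{d_j}x_j=0$ on a line, again finite. Since $X$ is a hypersurface, hence Cohen--Macaulay, local equations cutting out a zero-dimensional set form a regular sequence.

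\textbf{Local multiplicity at $Q$ (main obstacle).} For each $n$-subset $I$, I need the local intersection multiplicity $(D_i)^I_Q$ at $Q$, to compare with the global intersection number $d+1$. Work in the chart $x_0=1$, where $Q$ is the origin.

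1. For $I=\{1,\ldots,n\}$, the equations are $x_1=\cdots=x_n=0$ together with $F=0$. This restricts to $f^*_{n+1}(z)\,z^{d+1-d_{n+1}}=0$ on the $z$-line, where $f^*_{n+1}(z)=F_{n+1}(z,1)$ is the reversed polynomial with $f^*_{n+1}(0)\neq0$. So the local multiplicity is $d+1-d_{n+1}$.
2. For $I$ containing $z$ and omitting $x_j$, the restriction is $f^*_j(z)z^{d-d_j}x_j=0$ with $z=0$. Here I expect the multiplicity to be $1$, or at worst of order $d+1-d_j$ depending on how the other coordinates are cut.

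Carrying out these computations carefully, including the possibility that $X$ is singular at $Q$, is the main obstacle. The aim is the uniform bound
\[
(D_i)^I_Q\le d+1-\min_i d_i .
\]
Then \eqref{localmultip} reads $d+1-\min_i d_i<\big(\tfrac{n}{n+1}\big)^n(d+1)$. This rearranges exactly to \eqref{degrees}, since \eqref{degrees} is equivalent to $\min_i d_i/(d+1)>1-(n/(n+1))^n$. If the bound turns out to be inaccurate for the mixed subsets, I would instead estimate $\beta(D,(\bigcap_{i\in I}D_i)_Q)$ directly, by a Newton-polygon/monomial-ideal computation in local coordinates at $Q$.

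\textbf{Conclusion.} Theorem~\ref{mainTheorem2} then yields a proper Zariski-closed $Z\subset X$ outside which only finitely many integral points lie. Intersecting $Z$ with $V$ gives the claimed exceptional set.
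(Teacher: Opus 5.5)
\textbf{There is a genuine gap, in your compactification.} It is not the local-multiplicity computation you flag; the proper-intersection hypothesis of Theorem~\ref{mainTheorem2} itself fails.

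In your check of an $n$-tuple containing $D_{n+1}=\{z=0\}$ and omitting $x_j$, you dropped the factor $z^{d-d_j}$. On the line $\{z=0,\ x_i=0\ (i\neq j)\}$ your $F$ restricts to $F_j(0,x_0)\,z^{d-d_j}x_j$ evaluated at $z=0$. This is identically zero as soon as $d_j<d$. So the whole line lies in $X$, and $\bigcap_{i\neq j}D_i$ is a curve through $Q$.

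No estimate of $\beta$, Newton-polygon or otherwise, can rescue this, because Theorem~\ref{mainTheorem2} requires every $n$-tuple to meet properly. A concrete case: $n=2$ and $(d_1,d_2,d_3)=(5,4,4)$ satisfies \eqref{degrees}. Your form is $F_1x_1+F_2\,z\,x_2-F_3\,z^2$, and it vanishes on $\{x_1=z=0\}$.

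It gets worse if $d_{n+1}>\max_{i\le n}d_i$. Then every term of your $F$ is divisible by $z$, so $X$ is reducible: the homogenized coefficients share the factor $z$ even though the $f_i$ have no common zero. Your construction works only when $d_1=\cdots=d_n=\max_i d_i$. Unequal degrees are exactly the situation Theorem~\ref{unit1} is meant to handle.

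\textbf{The missing idea.} The paper puts the degree-padding on the unit coordinates rather than on the coordinate at infinity.
\begin{enumerate}
\item Use the symmetry of the unit equation (divide through by one of the units) to order $d_1\ge\cdots\ge d_{n+1}$.
\item Since $\mathcal O_{k,S}^*$ is finitely generated, pass to a finite extension $L$ and a set $S'$ in which every $S$-unit has a $(d_1+1-d_j)$-th root. Write $u_j=w_j^{d_1+1-d_j}$.
\item The equation becomes $f_1(t)w_1+\sum_{j=2}^n f_j(t)w_j^{d_1+1-d_j}=f_{n+1}(t)$. Its homogenization $x_1F_1+\sum_{j=2}^n x_j^{d_1+1-d_j}F_j-x_{n+1}^{d_1+1-d_{n+1}}F_{n+1}$ has degree $d_1+1$, where $F_j$ is the homogenization of $f_j$ in $(x_0,x_{n+1})$.
\end{enumerate}

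Because $F_j(1,0,\ldots,0)\neq 0$, this $X$ and the $n+1$ coordinate hyperplanes are in general position. Hence every $n$-tuple of the $D_i=\{x_i=0\}|_X$ meets properly, and $\bigcap_i D_i=\{[1:0:\cdots:0]\}$. At that point, the $n$-tuple omitting $j$ has local multiplicity $d_1+1-d_j\le \max_i(d_i+1)-\min_i d_i$, while $D^n=d_1+1$.

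From there your numerical reduction of \eqref{localmultip} to \eqref{degrees} goes through as you wrote it. The exceptional set is then pushed down to $V$ by the finite map $w_j\mapsto w_j^{d_1+1-d_j}$.
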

Note that if the degrees $d_i$ are small and the dimension of $X$ is large, condition \eqref{degrees}  may fail to hold. However, by applying Theorem~\ref{mainTheoremCZrev}, we obtain the following extension of \cite[Theorem 4.1]{CZ2006} and \cite[Theorem 8]{CZ2010}. Notably, this result covers the case $d_1=\cdots=d_{n+1}=1$, which is left unaddressed by the preceding theorem.

\begin{theorem}\label{unit2}
Let $f_i\in k[t]$, $1\le i\le n+1$, be non-constant polynomials of the same degree $d$.
Let  $V$  be the hypersurface  in $\mathbb A^{n+1}$ defined by the zero locus of  
   \begin{equation}\label{funiteq}
        f_1(x_0)x_1 + \cdots + f_n(x_0)x_n=f_{n+1}(x_0).
    \end{equation} 
  Then there exists a proper Zariski closed subset $Z\subset V$    such that all but finitely many solutions $(t,u_1,\hdots,u_{n})\in \mathcal O_S\times  (\mathcal O_S^*)^n$ of the equation  \eqref{funiteq}
are contained in $Z$.  
 \end{theorem}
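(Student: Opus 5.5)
The plan is to deduce Theorem~\ref{unit2} from Theorem~\ref{mainTheoremCZrev}. To do so we realize the solutions $(t,u_1,\ldots,u_n)$ as $(\sum_{i=1}^{n+1}D_i,S)$-integral points on a suitable projective $n$-fold $X$. Here $D_1,\ldots,D_{n+1}$ are reduced, irreducible, base point free, numerically equivalent divisors meeting in a single point transversally. I am not certain of the right model, so this is a plan rather than a checked argument.

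\textbf{Step 1: the model.} Homogenize $t=[s_0:s_1]$ and write $F_i(s_0,s_1)$ for the degree-$d$ forms attached to $f_i$. Since the $u_i$ are units and $t\in\mathcal O_S$, the natural candidate is the closure $X$ of $V$ in $\mathbb P^1\times\mathbb P^n$, with coordinates $[y_0:\cdots:y_n]$ and $u_i=y_i/y_0$. This is the hypersurface
\[
F_1y_1+\cdots+F_ny_n=F_{n+1}y_0
\]
of bidegree $(d,1)$. A solution then avoids, modulo $S$, the divisors $\{y_i=0\}$ for $0\le i\le n$ and the fibre $F_\infty=\{s_0=0\}$.

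Integrality with respect to a larger boundary implies integrality with respect to any sub-boundary. So I will choose $n+1$ divisors of the shape $D_i=\{y_i=0\}+(\text{fibre terms})$, or irreducible members of a linear system $|H+aF|$ built from the combinations $F_iy_i$. The requirements are:
\begin{itemize}
\item all $D_i$ are numerically equivalent to a single ample $D=H+aF$;
\item each $D_i$ is irreducible;
\item their common intersection is the single point lying over $t=\infty$.
\end{itemize}
Should $X\subset\mathbb P^1\times\mathbb P^n$ be unsuitable, I would instead use a linear change of coordinates of the form $z_i=F_iy_i$. This turns the equation into the hyperplane $\sum_{i=1}^n z_i=z_0$ in a twisted projective bundle over $\mathbb P^1$. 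In that model the coordinate divisors $z_0,\ldots,z_n$ meet only over $s_0=0$.

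\textbf{Step 2: verifying the hypotheses of Theorem~\ref{mainTheoremCZrev}.}
\begin{itemize}
\item \emph{Base point freeness and ampleness.} These follow because $D$ is a restriction of $\mathcal O(a,1)$ with $a>0$.
\item \emph{Single common point.} For the point over $t=\infty$, the leading coefficients of the $f_i$ are nonzero constants. So the local equations of any $n$ of the $D_i$ at that point have linearly independent differentials, which gives transversality.
\item \emph{The condition $D^n>1$.} By Remark~\ref{localcondition2}, it is enough to exhibit $n$ of the divisors meeting in more than one point. Alternatively, compute $D^n=(H+aF)^n\cdot(H+dF)$ on $\mathbb P^1\times\mathbb P^n$, which equals $a n+d$ for instance. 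This is $>1$ even when $d=1$, which matches the remark that the case $d_1=\cdots=d_{n+1}=1$ is covered.
\end{itemize}
We also need to handle the points of $X$ where the $D_i$ acquire extra common components. These are the finitely many $t$ with $F_i(t)=F_{n+1}(t)=0$; we exclude them or put them into $Z$, after first reducing to the case where the $f_i$ have no common zero.

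\textbf{Step 3: conclusion.} Theorem~\ref{mainTheoremCZrev} gives a proper closed $Z'\subset X$ containing all but finitely many integral points. Then $Z=Z'\cap V$, together with the finitely many excluded fibres, is the required set.

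\textbf{Main obstacle.} I expect the hard part to be Step 1: choosing a boundary consisting of exactly $n+1$ \emph{irreducible}, numerically equivalent divisors whose total intersection is a \emph{single} point. The naive choice $\{y_i=0\}$ fails for two reasons: the divisors $H_1,\ldots,H_n$ already meet in the $d$ points over the roots of $f_{n+1}$, and the fibre $F_\infty$ is an extra, $(n+2)$-nd component. The first thing I would try is to absorb $F_\infty$ into each $D_i$ by using the twisted coordinates $z_i=F_i y_i$. I would then check, by a local computation at $t=\infty$, that irreducibility and transversality survive.
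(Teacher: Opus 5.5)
You have the right reduction to Theorem~\ref{mainTheoremCZrev}, but the proposal never exhibits a model on which its hypotheses hold, and the models you suggest cannot work.

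On the closure $X'\subset\mathbb P^1\times\mathbb P^n$ the boundary consists of the $n+1$ divisors $\{y_i=0\}$, of class $H$, and the fibre $F_\infty$, of class $F$. Since $H$ and $F$ are not numerically proportional on $X'$, any $n+1$ numerically equivalent divisors supported on this boundary fall into one of two cases. Either they all contain $F_\infty$, and then their common intersection contains the $(n-1)$-dimensional set $F_\infty\cap X'$. Or none of them does, and then the essential hypothesis $t\in\mathcal O_S$ is never used. So ``absorbing $F_\infty$ into each $D_i$'' is exactly what destroys the one-point condition.

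The fallback $z_i=F_iy_i$ fails for a different reason: it breaks integrality. On solutions $z_i=f_i(t)u_i$ is not an $S$-unit, and $\{z_i=0\}$ contains the fibres over the roots of $f_i$.

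You also misidentify one obstruction. That $D_1,\ldots,D_n$ meet at the points over the roots of $f_{n+1}$ is harmless, because Theorem~\ref{mainTheoremCZrev} only constrains the full $(n+1)$-fold intersection. In fact this extra intersection is precisely what gives $D^n>1$ via Remark~\ref{localcondition2}.

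The missing idea is to homogenize $t$ and all the $u_i$ with the \emph{same} variable. That is, take the closure $X\subset\mathbb P^{n+1}$ of $V$, given by $F=\sum_{i=1}^{n+1}x_ig_i$ with $g_i=x_{n+1}^df_i(x_0/x_{n+1})$ for $i\le n$ and $g_{n+1}=-x_{n+1}^df_{n+1}(x_0/x_{n+1})$.
\begin{itemize}
\item The points $[t:u_1:\cdots:u_n:1]$ are integral for $D_i=\{x_i=0\}|_X$, $1\le i\le n+1$. The single hyperplane $x_{n+1}=0$ records both $t\to\infty$ and $u_i\to\infty$, while $x_0=0$ is not in the boundary.
\item So the $n+2$ conditions collapse to $n+1$ hyperplane sections. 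These are automatically linearly equivalent, ample and base point free, with $\bigcap_iD_i=\{P\}$, $P=[1:0:\cdots:0]$.
\item At $P$ one has $\partial F/\partial x_j(P)=g_j(1,0,\ldots,0)=\pm(\text{leading coefficient of }f_j)\neq0$, since each $f_j$ has degree exactly $d$. This gives smoothness of $X$ at $P$ and transversality of any $n$ of the $D_i$, which is the computation you anticipated.
\item Finally, $D^n=\deg X=d+1\ge2$.
\end{itemize}

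Irreducibility, which you single out as the main obstacle, is not the real issue. In this model $D_{n+1}$ is cut out in $\{x_{n+1}=0\}$ by $x_0^d\sum_{i\le n}c_ix_i$, where the $c_i$ are the leading coefficients, so it is reducible. However, its only component through $P$ is the reduced hyperplane $\sum_ic_ix_i=0$. The blowup argument behind Theorem~\ref{mainTheoremCZrev} uses only three things: the local structure at $P$, nefness of $\pi^*D_i-E$ (automatic for hyperplane sections), and proper intersection of $\tilde D_1,\ldots,\tilde D_{n+1},E$.
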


Second, we present two applications of Theorem~\ref{MainThmgcd} concerning perfect powers in exponential Diophantine equations and in digit representations. Exponential Diophantine equations involving perfect powers of polynomial values have been studied extensively in number theory. In particular, Corvaja and Zannier \cite{CZ00} studied equations of the form
\[
f(a^m,y)=b^n,
\]
where $f$ has rational coefficients and $a,b$ are positive integers with a nontrivial common factor. Related problems were later investigated in \cite{HLX} under different assumptions on the polynomial $f$.
Our first application of Theorem~\ref{MainThmgcd} considers exponential Diophantine equations of the form
\[
f(a_1^{m},\ldots,a_{\ell}^{m},y)=b^n,
\]
where $a_1,\ldots,a_{\ell},b>1$ and $\gcd(a_1,\ldots,a_{\ell},b) > 1$.   This extends the work of \cite{HLX}.
The precise statement of this result is given in Sections~\ref{applyGCDdiop}.
 
 The problem of representing perfect powers with few nonzero digits has become an influential topic in Diophantine analysis; we refer the reader to \cite{CZ2013} for further references and developments in this direction. In \cite{CZ2013}, Corvaja and Zannier proved that there exist only finitely many perfect powers in $\mathbb{N}$ whose binary expansion contains exactly four nonzero digits. A key ingredient in their approach is a result on the integer values of analytic power series evaluated at $S$-units \cite{CZ2005}, derived from the Subspace Theorem.  We apply Theorem~\ref{MainThmgcd} to study this problem. Our goal is to demonstrate how Theorem~\ref{MainThmgcd} can serve as a replacement for the aforementioned Diophantine ingredient in \cite{CZ2005}.
 The statement of this result is provided in Section~\ref{applyGCDdigit}.

Preliminary material on heights and the definition of integral points are provided in Section~\ref{Preliminary}. The proof of Theorem~\ref{mainTheoremCZrev} is given in Section~\ref{ProofmainTheoremCZ}, while the proofs concerning unit equations are deferred to Sections~\ref{Secunit1} and \ref{Secunit2}.

 \section{Preliminary}\label{Preliminary} 
 
\subsection{Number Fields and Heights}\label{Preliminary1} 
Let $k$ be a number field, $M_{k}$ the set of places of $k$ and $\mathcal{O}_{k}$ the ring of integers of $k$. For $v \in M_{k}$, let $k_{v}$ denote the completion of $k$ with respect to $v$. Throughout the paper, we normalize the absolute value $|\cdot |_{v}$ corresponding to $v\in M_{k}$ as follows: If $v$ is archimedean and $\sigma$ is the corresponding embedding $\sigma:k \to \mathbb{C}$, then for $x \in k^{*}$, $|x|_{v}=|\sigma(x)|^{[k_{v}:\mathbb{R}]/[k:\mathbb{Q}]}$; if $v$ is non-archimedean corresponding to a prime ideal $\mathscr{P}$ in $\mathcal{O}_{k}$ which lies above a rational prime $p$, then it is normalized so that $|p|_{v}=p^{-[k_{v}:\mathbb{Q}_{p}]/[k:\mathbb{Q}]}$. In this notation, we have the product formula:
$$\displaystyle \prod_{v \in M_{k}}|x|_{v}=1$$
for all $x \in k^{*}$.
 
Let $S$ be a finite set of places in $M_{k}$. The ring of $S$-integers and the group of $S$-units are denoted by $\mathcal{O}_{k,S}$ and $\mathcal{O}_{k,S}^{*}$ respectively.

 We now define (local) Weil functions following \cite[Section 9]{VojtaBook}. Let $X$ be a projective variety over a number field $k$, and let $D$ be a Cartier divisor on $X$. Let $v \in M_k$, and let $\mathbb{C}_v$ be the completion of the algebraic closure $\bar{k}_v$ of $k_v$. A Weil function for $D$ at the place $v$ is a function $\lambda_{D,v}: (X \setminus \Supp(D))(\mathbb{C}_v) \to \mathbb{R}$ that satisfies the following local condition. For each $x \in X$, there exist an open neighborhood $U$ of $x$, a nonzero regular function $f \in \mathcal{O}(U)$ such that $D|_U=(f)$, and a continuous, locally bounded function $\alpha: U(\mathbb{C}_v) \to \mathbb{R}$ satisfying
\begin{equation*}
    \lambda_{D,v}(x) = -\log|f(x)|_v + \alpha(x)
\end{equation*}
for all $x \in (U\setminus \Supp(D))(\mathbb{C}_v)$. 
The Weil function for $D$ is then defined as the map
\[ \lambda_D: \coprod_{v\in M_k} (X\setminus \Supp(D))(\mathbb{C}_v) \to \mathbb{R}. \]
We also recall that  an $M_k$-constant is a collection $(c_v)$ of constant $c_v\in\mathbb R$ for each $v\in M_k$, such that $c_v=0$ for almost all $v$.

Weil functions satisfy the following properties:

\begin{enumerate}
     \item[(a)] If $\lambda_1$ and $\lambda_2$ are Weil functions for Cartier divisors $D_1$ and $D_2$ on $X$, respectively, then $\lambda_1 + \lambda_2$ extends uniquely to a Weil function for $D_1 + D_2$.
    
    \item[(b)] If $\lambda$ is a Weil function for a Cartier divisor $D$ on $X$, and if $f : X' \to X$ is a morphism of $k$-varieties such that $f(X') \not\subset \text{Supp } D$, then $x \mapsto \lambda(f(x))$ is a Weil function for the Cartier divisor $f^*D$ on $X'$.
    
    \item[(c)] If $X = \mathbb{P}^n_k$, and if $D$ is the hyperplane at infinity, then the function
    \begin{equation*}
        \lambda_{D,v}([x_0 : \dots : x_n]) := -\log \frac{|x_0|_v}{\max\{|x_0|_v, \dots, |x_n|_v\}}
    \end{equation*}
    is a Weil function for $D$.
    
    \item[(d)] If both $\lambda_1$ and $\lambda_2$ are Weil functions for a Cartier divisor $D$ on $X$, then $\lambda_1 = \lambda_2 $ up to an $M_k$-constant.
    
    \item[(e)] If $D$ is an effective Cartier divisor and $\lambda$ is a Weil function for $D$, then $\lambda$ is bounded from below by an $M_k$-constant.
    
    \item[(f)] If $D$ is a principal divisor $(f)$, then $-\log |f|$ is a Weil function for $D$.
\end{enumerate}

The theory of Weil functions extends beyond divisors to arbitrary closed subschemes, following the framework developed by Silverman \cite{Sil1987}. Let $Z(X)$ denote the set of closed subschemes of $X$. For any $W, Y \in Z(X)$ and a place $v$, let $\lambda_{W,v}$ and $\lambda_{Y,v}$ be the Weil functions associated with $W$ and $Y$, respectively. We can then define a Weil function for the intersection $W \cap Y$, unique up to an $O(1)$ bound, as their pointwise minimum:
\begin{equation*}
    \lambda_{W\cap Y,v} = \min\{\lambda_{W,v}, \lambda_{Y,v}\}.
\end{equation*}
We refer the reader to \cite[Theorem 2.1]{HLX} for a concise summary of the properties of these functions.

Let $S \subset M_k$ be a finite set of places, and for each $v \in S$, let $\lambda_{D,v}$ be a Weil function for $D$ at $v$, then, up to $O(1)$, we define the proximity function $m_{D,S}(x)$ as
\begin{equation*}
    m_{D,S}(x):=\sum_{v \in S}\lambda_{D,v}(x).
\end{equation*}
Then we define the (global) height function. The height function $h_{D}(x)$ for points $x \in X(k)$ is defined, up to $O(1)$, as
\begin{equation*}
    h_D(x):=\sum_{v \in M_k}\lambda_{D,v}(x)
\end{equation*}
for any Weil function $\lambda$ for $D$.

\subsection{Integral Points}
We briefly recall the definition of integral points from \cite[Section 13]{VojtaBook}.

Let $k$ be a number field. A point $(x_1,\ldots,x_n) \in \mathbb{A}^n(k)$ is an integral point if all $x_i$ lie in $\mathcal{O}_k$. Let $S \subset M_k$ be a finite set of places containing archimedean ones, then $(x_1,\ldots,x_n)$ is an $S$-integral point if all $x_i$ lie in the ring $\mathcal{O}_{k,S}$ of $S$-integers.

Let $X$ be an affine variety over $k$. Then a set $R \subset X(k)$ is $S$-integral if there is a closed immersion $i: X \hookrightarrow \mathbb{A}^n_k$ for some $n$ and a nonzero element $a \in k$ such that for all $x \in R$ all coordinates of $i(x)$ lie in $(1/a)\mathcal{O}_{k,S}$. Hence we refer to integrality of a set of points.

Alternatively, the notion of integral points can be formulated in terms of Weil functions. Let $X$ be a complete variety over $k$, and let $D$ be an effective Cartier divisor on $X$. Then a set $R \subset X(k)$ is a $(D,S)$-integral set of points if no point $x \in R$ lies in the support of $D$ and there is a Weil function $\lambda_D$ for $D$ and an $M_k$-constant $(c_v)$ such that
\begin{equation*}
    \lambda_{D,v}(x) \leq c_v,
\end{equation*}
for all $x \in R$ and all places $v \not \in S$. Equivalently, if $R$ be a set of $(D,S)$-integral points, then
\begin{equation*}
    m_{D,S}(x)=\sum_{v \in S}\lambda_{D,v}(x)=h_D(x) +O(1).
\end{equation*}

 \section{Proof of Theorem \ref{mainTheoremCZrev}}\label{ProofmainTheoremCZ}

 \begin{proof}[Proof of Theorem \ref{mainTheoremCZrev}]
By assumption, the intersection 
$\bigcap_{i=1}^{n+1}D_i $
is supported at a single point $Q$, and 
   any $n$ divisors  among  $D_1, \ldots,D_{n+1}$ intersect  transversally at $Q$.
Let $\pi : \tilde{X}\to X$ be the blowup along $Q$ and let $E=\pi^{-1}(Q)$ denote the exceptional divisor.  
Since  $\mult_Q(D_i)=1$ for all $1\le i\le n+1$, the pullback of $D_i$ is given by 
 \begin{align}\label{pullbak}
\pi^*D_{i}=\tilde D_i+E ,
\end{align}
where $\tilde D_i$ denotes the strict transform of $D_i$.  
Moreover,  
$$
(-1)^{n-1}E^n=1.
$$
Since $D_i$ is ample and base point free, it follows that $\tilde D_i$ is nef.  Furthermore, $\tilde D_i$ is big because $D_i^n=D^n\ge 2$. 
Let 
$$
A=\ell(\tilde D_1+\hdots +\tilde D_{n+1})+E,
$$
where $\ell$ is a sufficiently large integer  to be determined later.  Then $A$ is big and nef.

\medskip

\noindent{\bf Claim:}
 $\beta(A,D_i)>\ell$ for $1\le i\le n+1$, and $\beta(A,E)>1$.

\medskip

Assuming the claim, write $\beta(A,D_i)=\ell+\alpha$ for $1\le i\le n+1$, and $\beta(A,E)=1+\delta$, where $\alpha$ and $\delta$ are positive numbers.  The transversality of $D_i$ at $Q$ ensures that strict transforms $\tilde{D}_i$ and $E$ intersect properly on $\tilde{X}$.  Applying  Theorem \ref{Ru-Vojta} to the divisors $\tilde D_i$, $1\le i\le n+1$, $E$ with respect to  $ A=\ell(\tilde D_1+\hdots +\tilde D_{n+1})+E$, and taking  $ \varepsilon =\min\{\frac{\alpha}{2\ell},  \frac{\delta}2\}$, we obtain a proper Zariski closed subset $\tilde Z$ of $\tilde  X$ such that for all $P\in \tilde X(k)\setminus \tilde Z$,
\begin{align}\label{onX2}
(\ell+\alpha)  \sum_{i=1}^{n+1}  m_{\tilde D_{i},S}(P)+  (1+\delta)m_{E,S}(P)
 <(1+ \varepsilon)  \sum_{i=1}^{n+1} \ell h_{\tilde D_{i}}(P)+(1+ \varepsilon) h_E(P).
\end{align}
Now suppose that $\pi(P)\in \mathcal R$, where $\mathcal R\subset X(k)$ is a set   of $(D_1+\cdots+D_{n+1},S)$-integral points.  Then  for all $ P\in \pi^{-1}(\mathcal R) \setminus \tilde Z$, we have
$$
m_{\tilde D_{i},S}(P)=  h_{\tilde D_{i}}(P)+O(1),\quad m_{E,S}(P)=  h_{E}(P) +O(1).
$$ 
 By our choice of $\varepsilon$, inequality \eqref{onX2} implies that 
$$
h_{\tilde D_{i}}(P) \le O(1),\quad h_{E}(P) \le O(1).
$$
  Consequently, we have $h_{D}(\pi(P)) \le O(1).$
Since $D$ is ample, Northcott's theorem yields the desired finiteness.

We now prove the claim.
From \eqref{pullbak}, we have 
\begin{align}\label{NA}
 A\equiv  \ell (n+1)\pi^*D-\big( \ell (n+1)- 1\big)E.
\end{align}
Let $N$ be a sufficiently large integer compared to $\ell$.
To estimate $\beta(A,E)$, we  compute
\begin{align*}
&h^0(\tilde X,NA-mE)=h^0(\tilde X,N\ell (n+1)\pi^*D-\big(N\ell (n+1)-N+m\big)E)\cr
&= h^0( X,N\ell (n+1)D)-\frac{\big(N\ell (n+1)-N+m\big)^n}{n!}+O(\big(N\ell (n+1)-N+m\big)^{n-1}).
\end{align*}
by \cite[Corollary  6.9]{Voj2023} and \cite[Lemma 3.10]{HLX}.  
Let $c\in \mathbb Q$ with $0<c<\sqrt[n]{D^n}$ and $cN\in\mathbb Z$.
For $m<cN$, we obtain
\begin{align*}
&n!\cdot  h^0(\tilde X,NA-mE) 
&= N^n\ell^n (n+1)^nD^n- \big(N\ell (n+1)-N+m\big)^n +O(N^{n-1}). 
\end{align*}
Summing over $0\le m\le cN$, we deduce
\begin{align*}
&n!\cdot \sum_{m=0}^{cN} h^0(\tilde X,NA-mE) \cr
&= cN^{n+1}\ell^n (n+1)^nD^n- \frac{1}{n+1}\big(\big(N\ell (n+1)-N+cN\big)^{n+1}-\big(N\ell (n+1)-N\big)^{n+1}\big) +O(N^{n }) \cr
&  \geq cN^{n+1}\ell^n (n+1)^n(D^n-1)+O(N^{n}).
\end{align*}
On the other hand,
 \begin{align}\label{h0NA}
n!\cdot  h^0(\tilde X,NA)&=N^nA^n+O(N^{n-1})\cr
&=N^n\ell^n (n+1)^nD^n-N^n\big( \ell (n+1)- 1\big)^{n} +O(N^{n-1}).
\end{align}
Hence, 
 \begin{align*}
\beta(A,E)\ge \frac{c \ell^n (n+1)^n(D^n-1)}{\ell^n (n+1)^n\big(D^n- (1-\frac{1}{\ell (n+1)})^n\big)}.
\end{align*}
 Since $D^n\ge 2$, we can take the rational number $c$ sufficiently close to $\sqrt[n]{2}$, and for $\ell$ sufficiently large, this implies that  $
\beta(A,E)>1.$

 Next, we estimate $\beta(A,\tilde D_i)$.  Note that 
\begin{align}\label{NAD}
 NA-m\tilde D_i \equiv  \big(N\ell (n+1)-m)\pi^*D_i-\big( N\ell (n+1)-m-N\big)E  
\end{align}
 which is nef and big for $m\le N\ell (n+1)-N$.   Since $\pi^*D_i$ and  $\pi^*D_i-E=\tilde D_i$ is nef, we have 
\begin{align*}
 n!\cdot  h^0(\tilde X,NA-m\tilde D_i)
 &=  \big(\large(N\ell (n+1)-m\large)\pi^*D_i-\large( N\ell (n+1)-m-N\large)E \big)^n+O(N^{n-1})\\
 &= \big( N\ell (n+1)- m\big)^nD^n-\big( N\ell (n+1)- m-N \big)^n+O(N^{n-1}).
\end{align*}
(See \cite[p.~233, Case $k\le n$]{Aut09}.)
Summing over $0\le m\le N\ell (n+1)-N$, we deduce
\begin{align*}
&n!\cdot \sum_{m=0}^{N\ell (n+1)-N} h^0(\tilde X,NA-m\tilde D_i) \cr
&\ge  \frac{N^{n+1}}{n+1}\big(\big(\ell^{n+1}(n+1)^{n+1}-1\big) D^n- (\ell (n+1)-1)^{n+1}\big)+O(N^{n}).
\end{align*}
Then by \eqref{h0NA}, we have
 \begin{align*}
\beta(A,\tilde D_i)\ge \frac{ \big(\ell^{n+1}(n+1)^{n+1}-1\big) D^n- (\ell (n+1)-1)^{n+1} }{ \ell^n (n+1)^{n+1} D^n- (n+1)(\ell (n+1)-1)^n\big)}.
\end{align*}
To show $\beta(A,\tilde D_i)>\ell$, we compare the numerator with $\ell$ times the denominator:
\begin{align*}
&(\ell^{n+1}(n+1)^{n+1}-1)D^n - (\ell(n+1)-1)^{n+1} \\
&\quad - \ell\Big(\ell^n (n+1)^{n+1}D^n - (n+1)(\ell(n+1)-1)^n\Big) \\
&= -D^n + (\ell(n+1)-1)^n > 0
\end{align*}
for $\ell$  sufficiently large. This completes the proof of the claim.
\end{proof}

\section{Applications}\label{Application}
  
\subsection{Family of Unit Equations}
This section presents the proofs of Theorems \ref{unit1} and \ref{unit2}, concerning a one-parameter family of unit equations in multiple variables, as applications of Theorems \ref{mainTheorem2} and \ref{mainTheoremCZrev}, respectively.

\subsubsection{\bf Proof of Theorem  \ref{unit1} (Application of Theorem \ref{mainTheorem2})}\label{Secunit1}
 
\begin{proof}[Proof of Theorem  \ref{unit1}]
The proof essentially follows the surface case \cite[Theorem 5.14]{HLX}. We provide the necessary details here for completeness.

As in the surface case, the statement of the theorem is independent of the ordering of $f_1,\ldots,f_{n+1}$. Thus, after a suitable permutation, we may assume that
\[
d_1\geq \cdots \geq d_{n+1}>0.
\]

Rather than studying the original equation directly, we consider the slightly modified equation:
\begin{align}
\label{funiteq2}
f_1(t)u_1+f_2(t)u_2^{d_1+1-d_2}+\cdots +f_{n}(t)u_n^{d_1+1-d_n}=f_{n+1}(t),
\end{align}
where $t\in \mathcal{O}_{k,S}, u_i \in \mathcal{O}_{k,S}^*$.
Since $\mathcal{O}_{k,S}^*$ is finitely generated, we can find a finite Galois extension  $L/k$ and a finite set of places $S'$ of $L$ such that every element of $\mathcal{O}_{k,S}^*$ has a $(d_1+1-d_i)$-th root for all $i \in \{2,\ldots,n\}$ in $\mathcal{O}_{L,S'}^*$. Then \eqref{funiteq} reduces to studying the equation \eqref{funiteq2} (with $(k,S)$ replaced by $(L,S')$).

Let $F_i\in k[x_0,x_1,\ldots,x_{n+1}]$ be the homogeneous polynomial defined by $F_i(x_0,\dots,x_{n+1})=f_i(x_0/x_{n+1})x_{n+1}^{d_i}$, $i=1,\ldots,n+1$. Let $X\subset \mathbb{P}^{n+1}$ be the hypersurface defined by 
\begin{align*}
F:=x_1F_1+x_2^{d_1+1-d_2}F_2+\cdots +x_{n}^{d_1+1-d_n}F_n-x_{n+1}^{d_1+1-d_{n+1}}F_{n+1}=0.
\end{align*}
Since the polynomials $f_i$ do not have a common zero, it follows easily that $F$ is irreducible in $\overline{k}[x_0,x_1,\cdots,x_{n+1}]$.

Let $H_i$ be the hyperplane of $\mathbb{P}^{n+1}$ defined by $x_i=0$,  and let $D_i=H_i|_X$ be the corresponding divisor on $X$ for $i=1,\cdots,n+1$.   
 Let $P_0=[1:0:\cdots:0]\in X(k)$.  By construction,
 $\bigcap_{i=1}^{n+1} D_i=\{P_0\}\neq \emptyset$, and  $D_1\sim \cdots \sim D_{n+1}$.  We set $D=D_1$.

Let $I \subset \{1,\ldots,n+1\}$ with $|I|=n$, then the hyperplanes $H_i$, $i \in I$, and the hypersurface $X$ are in general position on $\mathbb{P}^{n+1}$, since $F_i(1,0, \ldots,0) \neq 0$ for all $i$. 
Following \cite[Remark 2.13]{HLX}, the divisors $D_i$, $i \in I$, intersect properly on $X$.  From the equation for $F$, the local ideal at $P_0$ satisfies  $(x_{1},\ldots,x_{n}) \mathcal{O}_{P_0,X}=(x_{1},\cdots,x_{n},x_{{n+1}}^{d_1+1-d_{{n+1}}}) \mathcal{O}_{P_0,X}$. It follows that the local intersection multiplicity of $D_i$ (for  $1\le i\le n$) at the point  $P_0$ is 
\begin{align*}
(D_{1}\cdot\ldots\cdot D_{n})_{P_0}=d_1+1-d_{{n+1}}.
\end{align*}

Given $\deg X=d_1+1$, we have $D^n=(H_1|X)^n=(H_1)^n.X=d_1+1$. Applying Lemma 3.11 in \cite{HLX}, we obtain:
\begin{align*}
\beta(D,(\bigcap_{i=1}^{n} D_{i})_{P_0}) &\geq \frac{n}{n+1}\sqrt[\leftroot{-3}\uproot{20}n]{\frac{D^n}{(D_{1}\cdot\ldots\cdot D_{n})_{P_0}}}\\
&\geq  \frac{n}{n+1}\sqrt[\leftroot{-3}\uproot{20}n]{\frac{d_1+1}{d_1+1-d_{{n+1}}}} .
\end{align*}
 By assumption \eqref{degrees}, we have $\beta(D,(\bigcap_{i=1}^{n} D_{i})_{P_0})>1$. 
We are thus in a position to apply Theorem \ref{mainTheorem2}.   There exists a proper Zariski-closed subset $\tilde Z\subset X$ over $L$ such that for any set $\mathcal R\subset X(L)$ of $(\sum_{i=1}^{n+1}D_i, S')$-integral points, the set   $\mathcal R\setminus \tilde Z$ is finite.  
\end{proof}

\subsubsection{\bf Proof of Theorem  \ref{unit2} (Application of Theorem \ref{mainTheoremCZrev})}\label{Secunit2}
We will prove the following theorem, which implies Theorem  \ref{unit2}.
\begin{theorem}\label{UnitCZ}
Let $f=\sum_{i=1}^n x_i g_i$, where $g_i \in k[x_0,\ldots,x_n]$, $n\ge 3$, are homogeneous polynomials of the same degree such that $g_i(1,0,\ldots,0)\ne 0$, and  let $X$  be the hypersurface  in $\mathbb P^n$ defined by the zero locus of  $f=\sum_{i=1}^nx_ig_i$.   Then there exists a Zariski-closed subset $Z \subset X$ such that all but finitely many solutions of
\[
f(y,u_1,\ldots,u_n)=0, \quad (y,u_1,\ldots,u_n)\in \mathcal O_S\times \mathcal O_S^*\times\cdots\times \mathcal O_S^*
\]
are contained in $Z$.
\end{theorem}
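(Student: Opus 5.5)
We prove this by applying Theorem \ref{mainTheoremCZrev} to the hypersurface $X\subset\mathbb P^n$, which has dimension $n-1$, with boundary divisors $D_i=\{x_i=0\}|_X$ for $1\le i\le n$. These are $n=\dim X+1$ divisors, as the theorem requires. Put $d=\deg g_i$, so $\deg X=d+1$ and every $D_i$ is a hyperplane section. Thus each $D_i$ is base point free and linearly equivalent to the ample divisor $D=H|_X$, with $D^{n-1}=\deg X=d+1\ge 2$ once $d\ge1$. (If $d=0$, then $X$ is a hyperplane and the unit equation case follows from the classical $S$-unit theorem, possibly after passing to a Zariski closed set.)

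A solution $(y,u_1,\ldots,u_n)$ gives the point $[y:u_1:\cdots:u_n]\in X(k)$. Since the $u_i$ are $S$-units and $y$ is an $S$-integer, at every $v\notin S$ we have $\max_j|x_j|_v=\max(|y|_v,1)$. Hence, using coordinates normalized relative to $x_0=y$ and noting $|u_i|_v=1$, the point is $(\sum D_i,S)$-integral. More carefully, the Weil function $\lambda_{D_i,v}=\log\bigl(\max_j|x_j|_v/|x_i|_v\bigr)$ equals $\log\max(|y|_v,1)$ outside $S$, which is not bounded. So we use the equation itself. From $\sum u_i g_i(y,u)=0$ with $g_i$ homogeneous of degree $d$ and $g_i(1,0,\ldots,0)\ne0$, we get that $y^d$ times a nonzero constant equals a combination of terms each containing a $u$-factor. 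This bounds $|y|_v$ by $O(1)$ (an $M_k$-constant) for $v\notin S$, because $y$ is an $S$-integer and the $u_i$ are $v$-units. Alternatively one could simply note that $y$ is $S$-integral and work with the projective coordinates $[y:u]$ directly. Either way, $\lambda_{D_i,v}\le c_v$ outside $S$, so the set of solutions is $(\sum_{i=1}^n D_i,S)$-integral.

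Next we verify the geometric hypotheses. At each solution point of $x_1=\cdots=x_n=0$ on $X$ we have $Q=[1:0:\cdots:0]$, and this is the only common point. Expanding $f$ at $Q$ in the affine coordinates $x_i/x_0$, the linear part is $\sum_i g_i(1,0,\ldots,0)x_i$, which is nonzero. So $X$ is smooth at $Q$ and $T_{X,Q}$ is the hyperplane $\sum c_ix_i=0$ with all $c_i\ne0$. Any $n-1$ of the hyperplanes $x_i=0$ cut $T_{X,Q}$ transversally, because together with the relation $\sum c_ix_i=0$ they force all coordinates to vanish, precisely because every $c_i\neq0$. Hence any $(n-1)$-tuple of the $D_i$ meets transversally at $Q$.

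The remaining hypotheses of Theorem \ref{mainTheoremCZrev} are that each $D_i$ is reduced and irreducible. This is the main obstacle: in general, $D_i=X\cap\{x_i=0\}$ is the hypersurface $\sum_{j\ne i}x_jg_j|_{x_i=0}$ in $\mathbb P^{n-1}$ and need not be irreducible. I would first try to reduce to this case: if some $D_i$ splits, the integral points then avoid more components and the degeneracy becomes easier. The plan here is to note that the proof of Theorem \ref{mainTheoremCZrev} only uses $\mathrm{mult}_Q D_i=1$, nefness of $\tilde D_i=\pi^*D_i-E$, and proper intersection of the strict transforms with $E$. Each of these holds here: nefness follows because the hyperplanes through $Q$ give a base point free system on the blowup. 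One could also apply Theorem \ref{Ru-Vojta} directly to the components. Irreducibility of $X$ also needs checking; if $X$ is reducible, apply the argument to each component through $Q$, and note that components avoiding $Q$ have at most finitely many nondegenerate points by the usual Ru--Vojta / subspace argument with $n$ boundary divisors in general position. With these hypotheses verified, Theorem \ref{mainTheoremCZrev} yields a proper closed $Z$ outside which there are finitely many points.
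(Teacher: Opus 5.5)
Your core argument matches the paper's proof. You use the same divisors $D_i=H_i|_X$, check that $Q=[1:0:\cdots:0]$ is the only common point, and show that $X$ is smooth there with tangent hyperplane $\sum_i g_i(1,0,\ldots,0)x_i=0$, so any $n-1$ of the $D_i$ meet transversally; then you apply Theorem~\ref{mainTheoremCZrev}. The only difference is cosmetic: you get $D^{n-1}\ge 2$ from $\deg X=d+1$, while the paper finds a second point of $\bigcap_{i\ge 2}D_i$ and uses Remark~\ref{localcondition2}. Your observation is correct and goes beyond the paper: the proof of Theorem~\ref{mainTheoremCZrev} only uses $\mult_Q D_i=1$, nefness of $\pi^*D-E$ (your hyperplanes-through-$Q$ argument does give this), and proper intersection of the $\tilde D_i$ with $E$. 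This is a legitimate way around irreducibility of the $D_i$, which the paper never checks.

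Two of your side remarks are wrong, however.

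\emph{Integrality.} $\log\max(|y|_v,1)$ is not unbounded outside $S$. Since $y\in\mathcal O_S$, we have $|y|_v\le 1$, so $\lambda_{D_i,v}=0$ for $v\notin S$, and the detour through the equation is unnecessary. Its description is also inaccurate: the coefficient of $y^d$ in $f(y,u)$ is $\sum_i g_i(1,0,\ldots,0)u_i$, not a nonzero constant.

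\emph{Reducible $X$.} You claim that components avoiding $Q$ carry only degenerate solutions by Ru--Vojta or the subspace theorem. This is false: $n=\dim X'+1$ divisors in general position is exactly the borderline case (think of $\mathbb G_m^{n-1}$). Take $f=x_0(x_1+x_2+x_3)$, so $g_i=x_0$. The solutions $[0:u_1:u_2:u_3]$ are Zariski dense in the component $\{x_0=0\}$ as soon as $\mathcal O_S^*$ is infinite. Moreover, the component through $Q$ is the plane $\{x_1+x_2+x_3=0\}$. There $D^{n-1}=1$, so your computation $D^{n-1}=d+1$ does not transfer to that component, and Theorem~\ref{mainTheoremCZrev} does not apply.

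The repair is easy. $Z$ only has to be a proper closed subset of $X$, so put every component not through $Q$ into $Z$. The component $X_Q$ through $Q$ is unique and reduced because $X$ is smooth at $Q$. Either $\deg X_Q\ge 2$ and your argument runs on $X_Q$, or $X_Q$ is the hyperplane $\sum_i g_i(1,0,\ldots,0)x_i=0$ and the classical $S$-unit theorem applies, as in your $d=0$ case. The paper's proof tacitly assumes $X$ is irreducible and says nothing about this case.
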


\begin{proof}[Theorem \ref{UnitCZ} implies  Theorem  \ref{unit2}]
Let $g_i=x_{n+1}^d f_i(\frac{x_0}{x_{n+1}})$ for $1\le i\le n$ and let $g_{n+1}= -x_{n+1}^d f_{n+1}(\frac{x_0}{x_{n+1}}) $. These are homogeneous polynomials of degree $d$.  Furthermore, because each $f_i$ has degree exactly $d$, which implies
 $g_i(1,0,\hdots,0)\ne 0$ for all  $1\le i\le n+1$.   Let $X$  be the hypersurface  in $\mathbb P^{n+1}$ defined by the zero locus of  $F=\sum_{i=1}^{n+}x_ig_i $.  Theorem  \ref{unit2} then follows directly from Theorem \ref{UnitCZ} by considering the affine patch where $x_{n+1}=1$.
\end{proof}

\begin{proof}[Proof of Theorem \ref{UnitCZ}]
Let $H_1,\ldots,H_n$ be hyperplanes in $\mathbb{P}^{n}$ defined by the coordinate equations 
$x_1=0,\ldots,x_n=0$, respectively. Let $X$ be the hypersurface defined by $f=0$.  Let
\[
D_1:=H_1|_X,\ldots,D_n:=H_n|_X.
\]
Then $\bigcap_{i=1}^{n }D_i$ contains a single point $P=[1:0:\cdots:0].$ Since 
\[
 \frac{\partial f}{\partial x_j} 
= g_j+\sum_{i=1}^n x_i\,\frac{\partial g_i}{\partial x_j},
\]
at the point
$P=[1:0:\cdots:0],$ we have
$\frac{\partial f}{\partial x_j}(P)=g_j(1,0,\dots,0) \neq 0.$   Hence $X$ is nonsingular at the point $P$. Moreover, the tangent space to $D_i$ at $p$ is defined by the equation 
\[
\sum_{j=1}^n g_j(1,0,\dots,0)\, x_j=0 \ \quad\text{and} \quad  x_i=0.
\]
Since $g_j(1,0,\dots,0)\neq 0$ for every $1\le j\le n$, any $n-1$ of the divisors $D_i$ ($1\le i\le n$) intersect transversally at $P$.
We now show that $\bigcap_{i=2}^n D_i$
contains more than one point. Note that
\[
\bigcap_{i=2}^n D_i
=
\left(\bigcap_{i=2}^n  H_i\right)\cap [x_1g_1=0].
\]
Since $g_1(1,0,\dots,0)\neq 0$, we have
\[
P \notin \bigcap_{i=2}^n [x_i=0]\cap [g_1=0].
\]
 Consequently, $\bigcap_{i=2}^n D_i$ contains more than one point. 
This implies that $D_i^n \ge 2$ for each $i$. 
We are now in a position to apply Theorem~\ref{mainTheoremCZrev} to complete the proof.

\end{proof}


\subsection{Application of Theorem~\ref{MainThmgcd}}\label{applyGCD}
We offer two applications of Theorem~\ref{MainThmgcd} by studying perfect powers of exponential Diophantine equations and of digits.

\subsubsection{\bf Diophantine Equation $f(a_1^{m},\hdots,  a_{\ell}^{m},y)=b^n$}\label{applyGCDdiop}
 In this section, we consider exponential Diophantine equations of the form
\[
f(a_1^{m},\ldots,a_{\ell}^{m},y)=b^n,
\]
where $a_1,\ldots,a_{\ell},b>1$ are integers and $\gcd(a_1,\ldots,a_{\ell},b) > 1$. Our result extends the work of \cite[Theorem 5.7]{HLX} and serves as  our first application of Theorem~\ref{MainThmgcd}. The proof follows the general strategy of \cite{HLX}, with suitable modifications to accommodate the higher-dimensional setting.

\begin{theorem}\label{DiophantinrEq}
Let $F (X_1,\ldots,X_{\ell}, Y, Z) \in \mathbb{Q}[X_1,\ldots,X_{\ell}, Y, Z]$ be a homogeneous polynomial of degree $d \ge 2$, and let $a_1,\ldots,a_{\ell},b > 1$ be integers. Suppose that 
\begin{enumerate}
\item[\rm(i)] $F(0,\ldots,0,1,0) \neq 0$.

\item[\rm(ii)] For all $i \in \{1,\ldots,\ell\}$, neither $F(0,\ldots,0,Y,Z)$ nor $F(0,\ldots,X_i,0,\ldots,Y,0)$ is a power of a linear form in $\bar{\mathbb{Q}}[X_1,\ldots,X_{\ell},Y,Z]$.
\label{tancond}

\item[\rm(iii)] $\gcd(a_1,\ldots,a_{\ell},b) > 1$.
\end{enumerate}

Define 
\[
f(X_1,\ldots,X_{\ell},Y)=F(X_1,\ldots,X_{\ell},Y,1).
\]
Then there exists a proper Zariski closed subset $\tilde Z\subset \mathbb A^{\ell+1}$ such that the set of solutions
\[
\{(a_1^{m},\ldots, a_{\ell}^{m},y) \in \mathbb{Z}^{\ell+1} 
\mid 
f(a_1^{m},\ldots,a_{\ell}^{m},y) = b^n 
\text{ for some } n \ge m \ge 0\}
\]
is contained in $\tilde Z$, except for possibly finitely many points.
\end{theorem}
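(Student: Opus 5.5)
We follow the strategy of \cite[Theorem 5.7]{HLX} and reduce to Theorem~\ref{MainThmgcd} on $X=\mathbb P^{\ell+1}$ with coordinates $[X_1:\cdots:X_\ell:Y:Z]$. A solution $(a_1^m,\ldots,a_\ell^m,y)$ with $f(a_1^m,\ldots,a_\ell^m,y)=b^n$ gives the rational point
\[
P=[a_1^m:\cdots:a_\ell^m:y:1]\in\mathbb P^{\ell+1}(\mathbb Q).
\]
Let $S$ consist of $\infty$ and the primes dividing $a_1\cdots a_\ell b$. Take the $n'=\ell+1$ divisors $D_i=\{X_i=0\}$ for $1\le i\le\ell$, $D_{\ell+1}=\{Z=0\}$, together with $D_{\ell+2}=\{F=0\}$, so that we have $n'+1=\ell+2$ divisors on a variety of dimension $\ell+1$. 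With $a_i=d$ for the hyperplanes and $a_{\ell+2}=1$, all are numerically equivalent to $dH$. Since $X_i=a_i^m$, $Z=1$ and $F(P)=b^n$ are $S$-units, $P$ is an $(\sum D_i,S)$-integral point.

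\textbf{Step 1: hypotheses of Theorem~\ref{MainThmgcd}.} Proper intersection means that $F$ does not vanish identically on any coordinate linear space $\{X_i=0\ (i\in J),\ Z=0 \text{ or not}\}$ of the relevant dimension. Condition (i) gives this, since $[0:\cdots:0:1:0]$ lies on every such linear space and is not on $\{F=0\}$.

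\textbf{Step 2: the index set.} Take $I_0=\{1,\ldots,\ell,\ell+2\}$, i.e.\ $X_1=\cdots=X_\ell=0$ and $F=0$. This intersection is the zero set of $F(0,\ldots,0,Y,Z)$ on the line $\mathbb P^1_{[Y:Z]}$. By (ii) this binary form is not a power of a linear form, so it has at least two distinct roots, and the intersection contains more than one point. (If needed, the sets obtained by dropping one $X_i$ and using $Z$, with the forms $F(0,\ldots,X_i,\ldots,Y,0)$, serve the same role.)

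\textbf{Step 3: the GCD bound.} Theorem~\ref{MainThmgcd} gives a proper closed $Z_0$ such that outside $Z_0$
\[
h_{\bigcap_{i\in I_0}a_iD_i}(P)\le\varepsilon\, h(P).
\]
Since $Z=1$ and $Y=y$ is an integer, $h(P)=\log\max(a_i^m,|y|)+O(1)$. The left side is at least
\[
\sum_{p\mid g}\min\bigl(v_p(a_1^m),\ldots,v_p(a_\ell^m),v_p(b^n)\bigr)\log p
\]
(scaled by $d$), where $g=\gcd(a_1,\ldots,a_\ell,b)>1$ by (iii). Because $n\ge m$, this lower bound is $\ge c\,m$ for some $c>0$.

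\textbf{Step 4: contradiction, and the main obstacle.} The main obstacle is comparing $h(P)$ with $m$, since $|y|$ could a priori be huge compared to $a_i^m$. From $f(a^m,y)=b^n$ with $y$ dominant, the $Y^d$ term dominates (using (i)), so $|y|^d\asymp b^n$. Here we must split cases. If $\log|y|\ll m$, then $h(P)=O(m)$, and Step 3 with small $\varepsilon$ gives $cm\le\varepsilon O(m)$, so $m$ is bounded. For bounded $m$, the points $(a^m,y)$ lie on finitely many lines $\{X=a^m\}$; on each, $f(a^m,y)=b^n$ is a one-variable equation with finitely many $y$ unless it is degenerate, and degenerate lines go into $\tilde Z$. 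If $\log|y|\gg m$, the $p$-adic valuation of $b^n$ at $p\mid g$ grows like $n\asymp\log|y|$, which strengthens the gcd lower bound relative to $h(P)$. Making this step uniform will likely require using the second family of index sets (involving $Z$ and $F(0,\ldots,X_i,\ldots,Y,0)$) to handle the regime where $y$ is large, as in \cite{HLX}. Finally, $\tilde Z$ is the affine part of $Z_0$ together with these exceptional lines.
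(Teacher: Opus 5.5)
There is a genuine gap, and it is the step you flag as the main obstacle. Your Steps 1--3 are fine: on $\mathbb P^{\ell+1}$ the divisors $\{X_i=0\}$, $\{Z=0\}$, $\{F=0\}$ intersect properly by (i), the points are integral, and (ii) lets you apply Theorem~\ref{MainThmgcd} to $\{X_1,\ldots,X_\ell,F\}$. At $p\mid\gcd(a_1,\ldots,a_\ell,b)$ this gives $m\ll\varepsilon h(P)$ when $n\ge m$, which is the paper's last inequality in \eqref{nmbound}. But you never obtain the reverse bound $h(P)\ll m$, and on $\mathbb P^{\ell+1}$ none of your index sets can supply it.

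Consider the regime $|y|\ge K\max_i a_i^m$ with $K$ large:
\begin{itemize}
\item By (i), $|F(P)|_\infty\asymp|y|^d$, so $\lambda_{\{F=0\},\infty}(P)=O(1)$. Every archimedean gcd involving $F$ is therefore bounded.
\item At $p$, every gcd involving $Z$ vanishes, since $Z=1$.
\item $\{X_1,\ldots,X_\ell,F\}$ at $p$ only yields $m\ll\varepsilon\log|y|$, which is no contradiction.
\end{itemize}
The only index set that would help is $\{X_1,\ldots,X_\ell,Z\}$ at $\infty$, whose gcd is $h(P)-m\max_i\log a_i$. On $\mathbb P^{\ell+1}$ it cuts out the single point $[0:\cdots:0:1:0]$, so Theorem~\ref{MainThmgcd} does not apply to it.

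Your remark that large $n$ ``strengthens the gcd lower bound'' is also wrong. The $p$-adic gcd is, up to constants, $\min(m\,\mathrm{ord}_p a_i,\ n\,\mathrm{ord}_p b)\log p$, which is capped by a multiple of $m$ however large $n$ is.

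The missing idea is the paper's cyclic-cover reduction. Write $n=dn_i+i$ with $0\le i\le d-1$, and view a solution as the point $[b^{n_i}:a_1^m:\cdots:a_\ell^m:y:1]$ on the hypersurface $V_i\colon b^iW^d=F(X_1,\ldots,X_\ell,Y,Z)$ in $\mathbb P^{\ell+2}$.
\begin{itemize}
\item On $V_i$ the boundary divisors $W=0$, $X_j=0$, $Z=0$ are linearly equivalent hyperplane sections, and the solutions are integral points for them.
\item The index set $\{X_1,\ldots,X_\ell,Z\}$ now meets $V_i$ in the $d\ge 2$ distinct points solving $b^iW^d=F(0,\ldots,0,1,0)\,Y^d$; this uses (i) and $d\ge2$.
\item Applying Theorem~\ref{MainThmgcd} to this index set at $\infty$ gives $m\max_j\log a_j\ge(1-\varepsilon)h(P)+O(1)$.
\item Combined with the $p$-adic bound from $\{W,X_1,\ldots,X_\ell\}$, this bounds $h(P)$ outside a proper closed subset. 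Projecting these subsets from the finitely many $V_i$ gives $\tilde Z$.
\end{itemize}
Without passing to $V_i$, i.e.\ without using that $b^n$ is $b^i$ times a $d$-th power of an $S$-unit, your $y$-dominant case stays open.
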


\begin{remark}
\begin{enumerate}
\item
A classic example of such a polynomial is a generalized Fermat-type form. For any degree $d\ge 2$, consider: $F(X_1,\ldots,X_{\ell},Y,Z)=Y^d+Z^d+X_1^d+\ldots+X_{\ell}^d$, where $d\ge 2$.
\item
When $\ell=1$, the condition $n \geq m$ can be removed, and the statement becomes identical to \cite[Theorem 5.7]{HLX}.
\end{enumerate}
\end{remark}

\begin{proof}
We first make a reduction.
Let $V_i$ be the hypersurface of $\mathbb{P}^{\ell+2}$ defined by
\[
b^iW^d = F(X_1,\ldots,X_{\ell},Y,Z),  \quad i\in \{0,\ldots, d-1\}.
\]
If $f(a_1^{m},\ldots, a_{\ell}^{m},y)=b^n$ for some $m,n,y \in \mathbb{Z}$, write
$n=dn_i+i$ with $n_i\in\mathbb{Z}$ and $i\in \{0,\ldots, d-1\}$. Then
$b^n=(b^{n_i})^d b^i$, and hence
\[
[W:X_1:\cdots:X_{\ell}:Y:Z]
=
[b^{n_i}:a_1^{m}:\cdots:a_{\ell}^{m}:y:1]
\in V_i(\mathbb{Q}).
\]

Let
\[
R_i
:=
\left\{
[b^{n_i}:a_1^{m}:\cdots:a_{\ell}^{m}:y:1]
\in V_i(\mathbb{Q})
\ \middle|\
m,n_i,y\in\mathbb{Z},\ m,n_i\ge 0
\right\}.
\]

Suppose that for each $V_i$ there exists a Zariski closed subset $Z_i \subset V_i$
such that $R_i \setminus Z_i$ is finite.
Let
\[
\pi_i : V_i \longrightarrow \mathbb{P}^{\ell+1},
\quad
[W:X_1:\cdots:X_{\ell}:Y:Z]
\longmapsto
[X_1:\cdots:X_{\ell}:Y:Z]
\]
be the projection onto the last $\ell+2$ coordinates.
Denote by $\widetilde{Z_i}$ the image $\pi_i(Z_i)$.
Since $V_i$ is projective, $\pi_i$ is proper, and therefore $\widetilde{Z_i}$ is a
Zariski closed subset of $\mathbb{P}^{\ell+1}$.

Let $\mathbb{A}^{\ell+1}$ be the affine open subset of $\mathbb{P}^{\ell+1}$ defined by $Z\neq 0$.
Then the conclusion of the theorem follows by taking
\[
\tilde Z:=\left(\bigcup_{i=0}^{d-1} \widetilde{Z_i}\right)\cap \mathbb{A}^{\ell+1}.
\]
Hence it suffices to show that, for each fixed $i$,  there exists a Zariski closed subset $Z_i \subset V_i$
such that $R_i \setminus Z_i$ is finite.
For notational simplicity, we write $V:=V_i$ and $R:=R_i$.
\[
R 
:=
\left\{
[b^{n}:a_1^{m}:\cdots:a_{\ell}^{m}:y:1]
\in V(\mathbb{Q})
\ \middle|\
m,n,y\in\mathbb{Z},\ m,n\ge 0
\right\}.
\]

 Let $D_1,\ldots,D_{\ell+2}$ be the divisors on $V$ defined by
\[
W=0,\quad X_i=0 \ (1\le i\le \ell), \quad \text{and } Z=0,
\]
respectively. Clearly the $D_i$ are linearly equivalent ample effective Cartier divisors; let $D$ be any divisor in this linear equivalence class. 

Since $F(0,\ldots,0,1,0) \neq 0$,  $V$ and  the hyperplanes defined by $W=0$, $X_i=0$ $(1\le i\le \ell)$, and $Z=0$ are in general position   in $\mathbb{P}^{\ell+2}$. By \cite[Remark 2.13]{HLX}, it follows that $D_1,\ldots,D_{\ell+2}$ intersect properly on $V$.

For $1\le j\le \ell+2$, let $I_j$ be the subset of $\{1,\ldots,\ell+2\}$ obtained by omitting the index $j$. By condition (ii), the sets 
\[
\Supp\!\left(\bigcap_{i\in I_{\ell+2}} D_i\right)(\bar{\mathbb{Q}})
\quad\text{and}\quad
\Supp\!\left(\bigcap_{i\in I_j} D_i\right)(\bar{\mathbb{Q}})
\]
for $2\le j\le \ell+1$ each contain more than one point. Moreover, the conditions $F(0,\ldots,0,1,0)\neq 0$ and $d\ge 2$ imply that 
\[
\Supp\!\left(\bigcap_{i\in I_1} D_i\right)(\bar{\mathbb{Q}})
\]
also contains more than one point.

Let $S$ be the set of places of $\mathbb{Q}$ defined by
\[
S=\{p \text{ prime} : p \mid b\prod_i a_i\}\cup\{\infty\}.
\]
Then $R$ is a set of $(D_1+\cdots+D_{\ell+2},S)$-integral points in $V(\mathbb{Q})$.
Let $P=[b^n:a_1^{m}:\cdots:a_\ell^{m}:y:1]\in R$, and  
\[
h:=h(P)=\log\max\{|b^n|,|a_1^{m}|,\ldots,|a_{\ell}^{m}|,|y|\}=h_D(P)+O(1).
\]
Let $p$ be any prime dividing $\gcd(a_1,\ldots,a_\ell,b)$. Using the standard local height functions for the divisors $D_i$ at the places $\infty$ and $p$ of $\mathbb{Q}$, we obtain
\begin{center}
\begin{tabular}{p{5cm}p{5cm}p{5cm}}
Divisor $D_i$ & $\lambda_{D_i,\infty}(P)$ & $\lambda_{D_i,p}(P)$ \\
$D_1$ & $h-n\log b$ & $n(\log p)\operatorname{ord}_p b$ \\
$D_i,\; 2\le i\le \ell+1$ & $h-m\log a_{i-1}$ & $m(\log p)\operatorname{ord}_p a_{i-1}$ \\
$D_{\ell+2}$ & $h$ & $0$
\end{tabular}
\end{center}

Let $\varepsilon$ be a sufficiently small positive number to be chosen later. By Theorem~\ref{MainThmgcd}, there exists a proper Zariski closed subset $\tilde Z\subset V$ such that for every $I_j$ and every $v\in S$,
\begin{align}
\min_{i\in I_j} \lambda_{D_i,v}(P)\le \varepsilon h_D(P), \label{equgcd}
\end{align}
for all but finitely many $P\in R\setminus Z$.

Applying \eqref{equgcd} with $(I_j,v)=(I_j,\infty)$, for $j=1,\ldots, \ell+1$, and $(I_{\ell+2},p)$, we obtain that for all $P=[b^n:a_1^{m}:\cdots:a_\ell^{m}:y:1]\in R\setminus \tilde Z$, then we have 
\begin{align}\label{nmbound} 
m\max_{1\le i\le\ell} \{\log a_i\} &\ge (1-\varepsilon)h_D(P)+O(1),\cr
 \max\{n\log b,\, m\max_{i\neq 1}\log a_i\} &\ge (1-\varepsilon)h_D(P)+O(1),\cr
 &\vdots\cr
 \max\{n\log b,\, m\max_{i\neq \ell}\log a_i\} &\ge (1-\varepsilon)h_D(P)+O(1),\cr
m=\min\{n,m\}&\le 2\varepsilon h_D(P)+O(1).
\end{align}
Taking
\[
0<\varepsilon<\frac{1}{2\max_i\{\log a_i\}+1},
\]
the first and last inequalities in \eqref{nmbound} imply that $h_D(P)$ is bounded for $P\in R\setminus \tilde  Z$. Since $D$ is ample, we conclude that $R\setminus \tilde Z$ is finite.

Finally, we note that when $\ell=1$, the assumption $n\ge m$ can be removed.   Indeed,  in the case of $n\le m$, the conclusion follows from the second and the last inequalities in \eqref{nmbound}, which imply that 
\[
n\le 2\varepsilon h_D(P)+O(1),\quad \text{and}\quad n\log b \ge (1-\varepsilon)h_D(P)+O(1).
\]
\end{proof}

\subsubsection{\bf Perfect Powers of Digits}\label{applyGCDdigit}

In this subsection, we revisit the problem of representing perfect powers with few nonzero digits   as an application of Theorem \ref{MainThmgcd}. 
As discussed in the introduction, Corvaja and Zannier \cite{CZ2013} bounded the number of perfect powers with exactly four nonzero binary digits, relying on a result from \cite{CZ2005} derived from the Subspace Theorem. In this subsection, we revisit this problem to show that Theorem \ref{MainThmgcd} can serve as a direct replacement for that Diophantine ingredient.  As in \cite{CZ2013}, our method yields more general statements; however, to keep the underlying ideas clear, we restrict ourselves to the following formulation.
 \begin{theorem}\label{digit}
Let $n \ge 2$ be a positive integer, let $d \ge 2$ be an integer, and let $0 < \alpha, \gamma < 1$ be two fixed real numbers. 
Then there exists a non-trivial polynomial $G \in \mathbb{Q}[X_1, \dots, X_n]$ such that
$G(2^{m_1}, \dots, 2^{m_n}) = 0$ for  all but finitely many  elements in the set 
\[
\mathcal{S} = \left\{ (2^{m_1}, \dots, 2^{m_n}) \in \mathbb{Z}^n \;\middle|\;
\begin{aligned}
& 0 < m_1 < \dots < m_n, \\
&m_{n-1} \le \alpha m_n \text{ or } m_1 \ge \gamma m_n, \\
& \text{and } 1 + 2^{m_1} + \dots + 2^{m_n} =z^d \text{ for some } z\in\mathbb{Z} 
\end{aligned}
\right\}.
\]

\end{theorem}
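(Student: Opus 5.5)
The plan is to mimic the proof of Theorem~\ref{DiophantinrEq}: realize the solutions as integral points on a hypersurface and apply Theorem~\ref{MainThmgcd} to force the exponents to satisfy incompatible inequalities. As there, write $d$-th powers with a residue twist. Put $z^d=1+2^{m_1}+\cdots+2^{m_n}$. Since $z$ is odd, consider for each $i\in\{0,\ldots,d-1\}$ the hypersurface
\[
V_i\subset\mathbb P^{n+1}:\quad W^d = X_0^d+2^{i}\big(\cdots\big),
\]
or more simply $V:\ W^d=X_0^d+X_1+\cdots+X_n$ in weighted form. To keep all divisors numerically parallel and ample, I would instead substitute $2^{m_j}=2^{r_j}(2^{q_j})^d$ with $m_j=dq_j+r_j$, $0\le r_j<d$. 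This gives finitely many hypersurfaces
\[
V_{\mathbf r}:\ W^d=X_0^d+\sum_{j=1}^n 2^{r_j}X_j^d\subset\mathbb P^{n+1}.
\]
The points are $P=[z:1:2^{q_1}:\cdots:2^{q_n}]$. These are $(D_0+\cdots+D_n,S)$-integral for $S=\{2,\infty\}$, where $D_j=\{X_j=0\}|_{V}$ and $D_0=\{X_0=0\}$. The hypersurface $V_{\mathbf r}$ is a smooth Fermat-type hypersurface, and the $n+1$ coordinate hyperplane sections $D_0,\ldots,D_n$ are in general position with it, so they intersect properly and are linearly equivalent to the ample hyperplane class $D$ on the $n$-dimensional $V$. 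Any $n$ of them meet in the set $\{W^d=c X_k^d\}$ on a line, which consists of $d\ge2$ points. Hence the hypothesis of Theorem~\ref{MainThmgcd} holds for every choice of $I_0$ (with all $a_i=1$).

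Next, apply Theorem~\ref{MainThmgcd} to each $I_0$ of size $n$ and to the places $\infty$ and $2$, obtaining $\min_{i\in I_0}\lambda_{D_i,v}(P)\le\varepsilon h(P)$ outside a proper closed $Z$. Here $h(P)\approx \log z\approx (m_n/d)\log 2$. At $v=2$ we have $\lambda_{D_j,2}(P)=q_j\log2$ and $\lambda_{D_0,2}=0$, since $z$ is odd. Taking $I_0=\{1,\ldots,n\}$ gives $q_1\le\varepsilon h$, so $m_1\le d\varepsilon h+O(1)\lesssim\varepsilon m_n$. This excludes the branch $m_1\ge\gamma m_n$ once $\varepsilon<\gamma/2$, up to finitely many points. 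At $v=\infty$ we have $\lambda_{D_j,\infty}(P)=h-q_j\log2$, and $\lambda_{D_0,\infty}=h$. Taking $I_0=\{0,1,\ldots,n-1\}$ gives $h-q_{n-1}\log2\le\varepsilon h$, i.e.\ $m_{n-1}\ge(1-O(\varepsilon))m_n$. This is incompatible with $m_{n-1}\le\alpha m_n$ for small $\varepsilon$ and large $h$. So every point of $\mathcal S$ outside the exceptional sets has bounded height, and Northcott leaves finitely many. Note that either of the disjunctive conditions alone leads to a contradiction.

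Finally, push the exceptional sets $Z_{\mathbf r}\subset V_{\mathbf r}$ down. The map $V_{\mathbf r}\to\mathbb P^n$, $[W:X_0:\cdots:X_n]\mapsto[X_0:\cdots:X_n]$, is finite. So $\pi(Z_{\mathbf r})$ is a proper closed subset given by some nonzero form $G_{\mathbf r}(X_0,X_1,\ldots,X_n)$. Setting $X_0=1$ and $X_j^d=2^{m_j-r_j}$ requires care, because $G_{\mathbf r}(1,2^{q_1},\ldots)$ is a polynomial in the $2^{q_j}$, not in the $2^{m_j}$. To fix this, I replace $G_{\mathbf r}$ by its norm $\prod_{\zeta}G_{\mathbf r}(1,\zeta_1X_1,\ldots,\zeta_nX_n)$ over $d$-th roots of unity $\zeta_j$, which is a polynomial in the $X_j^d$. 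Substituting $X_j^d=2^{m_j}/2^{r_j}$ then gives a nonzero polynomial $G'_{\mathbf r}$ in the $2^{m_j}$, and $G=\prod_{\mathbf r}G'_{\mathbf r}$. I expect this descent step, together with verifying the transversality and proper-intersection conditions for the twisted Fermat hypersurfaces, to be the main technical obstacle. The height comparisons themselves are routine.
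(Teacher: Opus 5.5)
Your proposal is correct and is essentially the paper's proof. It uses the same Fermat-type hypersurface with the coordinate hyperplane sections $D_0,\ldots,D_n$, applies Theorem~\ref{MainThmgcd} to $\{1,\ldots,n\}$ at the $2$-adic place and to $\{0,\ldots,n-1\}$ at the archimedean place, and descends the exceptional set with the same product over $d$-th roots of unity. The only difference is that you split each $m_j$ modulo $d$ into finitely many twisted hypersurfaces $V_{\mathbf r}$ over $\mathbb{Q}$, as the paper does in the proof of Theorem~\ref{DiophantinrEq}; the paper instead works on a single hypersurface over $k=\mathbb{Q}(\sqrt[d]{2},\mu_d)$, which is why it also needs a Galois norm in the descent step.
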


The following remark explains how the condition in the above theorem relates to the assumptions in \cite[Theorem~1]{CZ2005}.

\begin{remark}
Let $\nu$ be an absolute value of $\mathbb Q$.  
Let $\mathbf x_t=(x_{t1},\ldots,x_{tn})$,  be an infinite set of $n$-tuples in ${\mathbb Q^*}^n$ tending to zero in ${\mathbb Q_{\nu}}^n$.  
Recall the following condition appearing in \cite[Theorem~1]{CZ2005}:
\begin{align}\label{dominant}
{\hat h}(\mathbf x_t):=h(1,x_{t1},\ldots,x_{tn})
=O\!\left(-\log\!\left(\max_{1\le i\le n}\{|x_{ti}|_{\nu}\}\right)\right).
\end{align}

Let $P_m=(2^{m_1},\ldots,2^{m_n})$.  
When $\nu$ is the archimedean absolute value, we consider
\[
{\bf x}_m := (2^{m_0-m_n},\, 2^{m_1-m_n},\ldots,2^{m_{n-1}-m_n}),
\]
where $m_0:=0$. 
Then 
\[
\hat h({\mathbf x}_m)=\hat h(P_m)= m_n\log 2,
\]
and 
\[
-\log\!\left(\max_{1\le i\le n}\{|2^{m_{i-1}-m_n}|\}\right)
=(m_n-m_{n-1})\log 2.
\]

The condition $m_{n-1}\le\alpha m_n$ with $0<\alpha<1$ implies that 
$m_n-m_{n-1}\ge (1-\alpha)m_n$. Consequently,  condition \eqref{dominant} is satisfied.

Conversely, if \eqref{dominant} holds, then there exist positive real constants $c_1$ and $c_2$ such that 
\[
c_1(m_n-m_{n-1})\le m_n\le c_2(m_n-m_{n-1}).
\]
Clearly $c_2>1$, and the second inequality implies that 
\[
m_{n-1}\le\frac{c_2-1}{c_2} m_n.
\]
Therefore, our condition $m_{n-1}\le\alpha m_n$ with $0<\alpha<1$ is equivalent to \eqref{dominant} when $\nu$ is the archimedean absolute value on $\mathbb Q$.

Similarly, the condition $m_1\ge\gamma m_n$ with $0<\gamma<1$ is equivalent to \eqref{dominant} when $\nu$ is the $2$-adic absolute value on $\mathbb Q$, by taking ${\bf x}_m=P_m$.
\end{remark}

\begin{proof}
Let $k=\mathbb{Q}(\sqrt[d]{2},\mu_d)$, where $\mu_d$ is a primitive $d$-th root of unity. 
Let $S\subset M_k$ be the finite set of places containing the archimedean ones and the places lying above the finite place $2$. 
Let $H_0,\ldots,H_n$ be hyperplanes in $\mathbb{P}^{n+1}$ defined by the coordinate equations 
$x_0=0,\ldots,x_n=0$, respectively. 
Let $V$ be the hypersurface in $\mathbb{P}^{n+1}$ defined by
\[
x_{n+1}^d-x_0^d-x_1^d-\cdots-x_n^d=0 .
\]
Note that $H_0,\ldots,H_n,V$ are in general position in $\mathbb{P}^{n+1}$. 
Then, by Remark 2.13 in \cite{HLX}, the divisors
\[
D_0:=H_0|_V,\ldots,D_n:=H_n|_V
\]
intersect properly as divisors on $V$.

Consider the points
\[
P_m= [1:2^{\frac{m_1}{d}}:\cdots:2^{\frac{m_n}{d}}:y_m]\in V\subset \mathbb{P}^{n+1}(k),
\]
where $y_m\in\mathbb{Z}$ satisfies
\[
y_m^d=1+2^{m_1}+\cdots+2^{m_n}.
\]
A straightforward calculation shows that such points $P_m$ are $(D_0+\cdots+D_n,S)$-integral points on $V$.

Note that for any $n$ divisors among $D_0,\ldots,D_n$, their intersection contains more than one point  in $V$ (since $d\ge2$). 
Hence the intersection condition in Theorem \ref{MainThmgcd} is satisfied for any index set 
$I\subset\{0,\ldots,n\}$ with $|I|=n$. 
Denote
\[
I_j=\{0,\ldots,n\}\setminus\{j\}.
\]

Since $D$ is an ample divisor, we have $h_D(P_m)=O(h(P_m))$. 
Applying Theorem \ref{MainThmgcd} with
\[
\varepsilon=\min\!\left\{\frac{1-\alpha}{2},\frac{\gamma}{2}\right\},
\]
we obtain a proper Zariski closed subset $Z$ such that
\begin{equation}\label{gcddigit}
h_{\bigcap_{i\in I_j}D_i}(P)<\varepsilon\,h(P)
\end{equation}
for all $P\in V(k)\setminus Z$.

We first note that
\begin{equation}\label{heightP}
h(P_m)=\frac{1}{d}\log y_m^d,
\end{equation}
and
\begin{equation}\label{heightineq}
2^{m_n}\le y_m^d=1+2^{m_1}+\cdots+2^{m_n}\le n\cdot2^{m_n}.
\end{equation}
Therefore,
\begin{equation}\label{heightP2}
\frac{m_n}{d}\log2\le h(P_m)\le \frac{m_n}{d}\log2+\frac{1}{d}\log n .
\end{equation}

We now compute the gcd heights. By the integrality, we only need to consider the contributions from the archimedean place and the place above $2$. 
Using the standard local height for $D_i$ and an absolute value $\nu$ of $k$, we have
\[
\lambda_{D_i,\nu}(P_m)
=\frac{1}{d}\log\max\{|y_m^d|_\nu,1,|2^{m_1}|_\nu,\ldots,|2^{m_n}|_\nu\}
-\frac{1}{d}\log|2^{m_i}|_\nu,
\]
for $0\le i\le n$, where we set $m_0:=0$.

With this convention,
\begin{equation}\label{localhtinf}
\lambda_{\bigcap_{i\in I_j}D_i,\infty}(P_m)
=
\begin{cases}
\displaystyle \frac{1}{d}\log y_m^d-\frac{1}{d}m_n\log2,
& 0\le j\le n-1,\\[6pt]
\displaystyle \frac{1}{d}\log y_m^d-\frac{1}{d}m_{n-1}\log2,
& j=n ,
\end{cases}
\end{equation}
and
\begin{equation}\label{localht2adic}
\lambda_{\bigcap_{i\in I_j}D_i,2}(P_m)
=
\begin{cases}
0, & 1\le j\le n,\\[4pt]
\displaystyle \frac{1}{d}m_1\log2, & j=0 .
\end{cases}
\end{equation}

Combining these with \eqref{heightP} and \eqref{heightP2}, we obtain
\begin{equation}\label{localht2}
h_{\bigcap_{i\in I_0}D_i}(P_m)
= h(P_m)-\frac{1}{d}(m_n-m_1)\log2
\ge \frac{m_1}{d}\log2 ,
\end{equation}
and
\begin{equation}\label{localht3}
h_{\bigcap_{i\in I_n}D_i}(P_m)
= h(P_m)-\frac{1}{d}m_{n-1}\log2
\ge \frac{1}{d}(m_n-m_{n-1})\log2 .
\end{equation}

Thus, for all $P_m\notin Z$, from \eqref{gcddigit} and \eqref{heightP2} we obtain
\begin{equation}\label{localht4}
m_1<\varepsilon\!\left(m_n+\frac{\log n}{\log2}\right)
\end{equation}
and
\begin{equation}\label{localht5}
(1-\varepsilon)m_n<m_{n-1}+\varepsilon\frac{\log n}{\log2}.
\end{equation}

If $m_{n-1}\le \alpha m_n$, then our choice $\varepsilon\le\frac{1-\alpha}{2}$ together with \eqref{localht5} implies
\[
m_n<\frac{\log n}{\log2}.
\]
Similarly, if $m_1\ge \gamma m_n$, we obtain the same conclusion from \eqref{localht4} and the condition $\varepsilon\le\frac{\gamma}{2}$.
Therefore, there are only finitely many such points $P_m\in V(k)\setminus Z$.
 
Finally, for the points $P_m=[y_m:1:2^{m_1/d}:\cdots:2^{m_n/d}] \in Z$, we claim that there exists a non-trivial polynomial $G \in \mathbb{Q}[X_1, \dots, X_n]$ such that $G(2^{m_1}, \dots, 2^{m_n})=0$. 
To see this, consider the projection onto the last $n+1$ coordinates 
$\pi  : V  \rightarrow \mathbb{P}^{n}$.  Then $\pi(Z)$ is a proper Zariski closed subset of $\mathbb P^n$.  Therefore, 
there exists a non-trivial polynomial $R \in k[x_1, \dots, x_n]$ such that $R(2^{m_1/d}, \dots, 2^{m_n/d}) = 0$ for all points $P_m=[y_m:1:2^{m_1/d}:\cdots:2^{m_n/d}] \in Z$. 

Next, we define the Galois norm of $R$:
\[ P(X_1, \dots, X_n) = \prod_{\sigma \in \text{Gal}(k/\mathbb{Q})} R^\sigma(X_1, \dots, X_n) \in \mathbb{Q}[X_1, \dots, X_n] \]
To ensure the polynomial is defined over the $d$-th powers of the variables, we take the product:
\[ G(X_1^d, \dots, X_n^d) = \prod_{k_1=0}^{d-1} \dots \prod_{k_n=0}^{d-1} P(\zeta_d^{k_1}X_1, \dots, \zeta_d^{k_n}X_n). \]
We note that the right-hand side is invariant under $X_i \mapsto \zeta_d X_i$, ensuring that $G$ is a polynomial in $X_1^d, \dots, X_n^d$. Substituting $X_i = 2^{m_i/d}$, we conclude that $G(2^{m_1}, \dots, 2^{m_n}) = 0$.
\end{proof}


\begin{thebibliography}{11}

\bibitem{Aut09}
\textsc{P. Autissier}, \textit{G\'eom\'etries, points entiers et courbes enti\'eres}, Ann. Sci. \' Ec.
Norm. Sup\'er {\bf 42} (2009), no.~4, 221--239.
 \bibitem{Aut11}
\textsc{P. Autissier}, \textit{Sur la non-densit\'e{} des points entiers}, Duke Math. J. {\bf 158} (2011), no.~1, 13--27.

   \bibitem{CZ00}
 \textsc{P. Corvaja and P. Zannier,} 
 \emph{On the Diophantine equation $f(a^m,y)=b^n$}, Acta Arith, \textbf{94}(2000), no. 1, 25--40.
\bibitem{CZ2004}  
 \textsc{P. Corvaja and P. Zannier,} 
 \emph{On the integral points on  surfaces}, Annal of Mathematics,  2nd series {\bf 160}(2004), no. 2, 705--726.
 \bibitem{CZ2005}  
 \textsc{P. Corvaja and P. Zannier,} 
 \emph{ $S$-unit points on analytic hypersurfaces}, Ann. Sci. \'Ecole  Norm. Sup.,  {\bf 38} (2005), no.~1, 76--92.
 
\bibitem{CZ2006}  
 \textsc{P. Corvaja and P. Zannier,} 
 \emph{On the integral points on certain surfaces}, IMRN, 2006, Article ID 98623, 1--20.

\bibitem{CZ2010}  
 \textsc{P. Corvaja and P. Zannier,} 
 \emph{ Integral points, divisibility between values of polynomials and entire
curves on surfaces},  Adv. Math. {\bf 225} (2010), no. 2, 1095--1118.
 
   
  \bibitem{CZ2013}  
 \textsc{P. Corvaja and P. Zannier,} 
 \emph{ Finiteness of odd perfect powers with four nonzero binary digits},    
 Ann. Inst. Fourier (Grenoble), {\bf 63} (2013), no.~2, 715--731.
  


 \bibitem{GNSW} \textsc{J.~Guo, K.~D.~Nguyen, C.-L.~Sun and J.~T.-Y. Wang,} \emph{Vojta's abc Conjecture  for algebraic tori and applications over function fields}, Advances in Mathematics, Paper No. 110358, 37 pp., 2025. 

\bibitem{Har}
\textsc{R. Hartshorne}, \textit{Algebraic geometry}, Graduate Texts in Mathematics, No. 52, Springer, New York-Heidelberg, 1977.

\bibitem{HL2021}
\textsc{G.~Heier and A.~Levin},
\emph{A generalized Schmidt subspace theorem for closed subschemes},
  Amer. J. Math., {\bf 143}(1)(2021), 213--226.
  
\bibitem{HLX}
\textsc{K. Huang, A. Levin and Z. Xiao}, \textit{A new Diophantine approximation inequality on surfaces and its application}, arXiv:2406.18879v1

\bibitem{HL}
\textsc{K. Huang and A. Levin}, \textit{Greatest common divisors on the complement
of numerically parallel divisors},  arXiv:2207.14432

\bibitem{Levin06}
\textsc{A. Levin},\textit{One-parameter families of unit equations}, Math. Res. Lett. {\bf 13}(2006), no. 5-6, 935--945.

\bibitem{LevinGCD}
\textsc{A. Levin},\textit{Greatest common divisors and Vojta's conjecture for blowups of algebraic
tori}, Invent. Math. {\bf 215} (2019), no. 2, 493--533.

\bibitem{LW}
\textsc{A.~Levin and J.~T.-Y. Wang},
  Greatest common divisors of analytic functions and nevanlinna theory on
  algebraic tori,  \emph{J. Reine Angew. Math.}, \textbf{767}
  (2020), 77--107.


 \bibitem{MR2015}
 \textsc{D.~McKinnon and M.~Roth}, 
  \emph{Seshadri constants, diophantine approximation and Roth's theorem for arbitrary varieties}, 
  Invent. Math.   {\bf 200} (2015),  no. 2, 513--583.


\bibitem{NW}
\textsc{J. Noguchi and J. Winkelmann}, \textit{Nevanlinna theory in several complex variables and Diophantine approximation}, Grundlehren der mathematischen Wissenschaften, 350, Springer, Tokyo, 2014.


  \bibitem{NWY07}
  \textsc{J. Noguchi, J.Winkelmann and K. Yamanoi,} 
  \emph{Degeneracy of holomorphic curves into algebraic varieties},  J. Math. Pures  Appl, \textbf{88}(2007), 293--306.



\bibitem{RTW21}
\textsc{E. Rousseau, A. Turchet and J. T.-Y. Wang}, \textit{Nonspecial varieties and generalized Lang-Vojta conjectures}, Forum of Mathematics, Sigma, {\bf 9} (2021), Paper No. e11.

\bibitem{Rubook}
\textsc{M. Ru}, \textit{ Nevanlinna theory and its relation to Diophantine approximation}, World Scientific Publishing Co. Pte. Ltd., Hackensack, NJ, 2021.
\bibitem{RV20}
\textsc{M. Ru and P. Vojta}, \textit{A birational Nevanlinna constant and its consequences}, American Journal of Mathematics, {\bf 142} (2020), no. 3, 957--991.

\bibitem{RuWang2017}
\textsc{M.~Ru and J.~T.-Y.~Wang},
\emph{A subspace theorem for subvarieties},
  Algebra Number Theory, {\bf 11} (2017), no. 10, 2323--2337.

\bibitem{RuWang2022}
\textsc{M.~Ru and J.~T.-Y.~Wang},
\emph{The Ru-Vojta results for subvarieties},
  International Journal of Number Theory, {\bf 18} (2022), no. 1, 61--74. 
\bibitem{RW24}
\textsc{M. Ru  and J. T.-Y. Wang}, \textit{Campana's orbifold conjecture for numerically equivalent divisors}, Journal of Geometric Analysis, {\bf 35} (2025), article number 313.

\bibitem{Sil1987} \textsc{J.~H.~Silverman,} \emph{Arithmetic distance functions and height functions in Diophantine geometry,}
 Math. Ann. {\bf 279} (1987), no. 2, 193--216.

\bibitem{VojtaBook}
\textsc{P.~Vojta,} \emph{Diophantine approximation and Nevanlinna theory}, {\it Arithmetic Geometry,} 111-224, Lecture Notes in Mathematics {\bf 2009}, Springer-Verlag, Berlin, 2011.

\bibitem{Voj2023}  
 \textsc{P. Vojta,} 
 \emph{Birational Nevanlinna constants, beta constants, and diophantine approximation to closed subschemes}, J. Th\'eor. Nombres Bordeaux, {\bf 35} (2023), no. 1, 17--61.
 
 \bibitem{WX2025}
  \textsc{J. T.-Y. Wang and Z. Xiao,} 
 \emph{Hyperbolicity and GCD for $n+1$  divisors with non-empty intersection, arXiv:2506.03534}
 

\bibitem{WY}
\textsc{J. T.-Y. Wang and Y. Yasufuku}, \textit{Greatest common divisors of integral points of numerically equivalent divisors}, Algebra Number Theory {\bf 15} (2021), no. 1, 287--305.

\bibitem{Yamanoi} 
 \textsc{K. Yamanoi},  
\textit{ Kobayashi hyperbolicity and higher-dimensional Nevanlinna theory}, 
Progr. Math., 308
Birkh\"auser/Springer, Cham, 2015, 209--273.

\end{thebibliography}
\end{document}